\documentclass[11pt]{article}
\usepackage{amsmath}
\usepackage{amsfonts,amsthm,amssymb}
\usepackage{amsfonts}
\usepackage{graphics}
\usepackage[utf8]{inputenc}
\usepackage{indentfirst}
\usepackage{amssymb}
\usepackage{hyperref}
\newtheorem{lem}{Lemma}[section]
\newtheorem{thm}[lem]{Theorem}

\newtheorem{conj}{Conjecture}

\newtheorem{Def}[lem]{Definition}
\newtheorem{Obs}[lem]{Observation}

\allowdisplaybreaks[4]

\begin{document}
	
	\title{Extremal Problems for the Family of $k$-Strongly Connected Digraphs\footnote{The research is supported by Natural Science Foundation of Xinjiang Uygur Autonomous Region (2025D01E02) and National Natural Science Foundation of China (12261086).}}
	\author{Qinglin Wang, Yingzhi Tian\footnote{Corresponding author. E-mail: wqlxju@163.com (Q. Wang), tianyzhxj@163.com (Y. Tian).} \\
		{\small College of Mathematics and System Sciences, Xinjiang
			University, Urumqi, Xinjiang 830046, PR China}}
	\date{}
	\maketitle
	
	\noindent{\bf Abstract} Let $\mathcal{D}$ be a family of digraphs. A digraph $D$ is \emph{$\mathcal{D}$-saturated} if it contains no member of $\mathcal{D}$ as a subdigraph, but for any arc $e$ in the complement of $D$, the digraph $D + e$ contains some member of $\mathcal{D}$ as a subdigraph. The \emph{saturation number} $\mathrm{sat}(n,\mathcal{D})$ and the \emph{extremal number} $\mathrm{ex}(n,\mathcal{D})$ are the minimum number and the maximum number of arcs among all $n$-vertex $\mathcal{D}$-saturated digraphs. For a positive integer $k$, let $\mathcal{D}_k$ denote the family of \emph{$k$-strongly connected digraphs}. In this paper, firstly, we prove that $$\mathrm{sat}(n,\mathcal{D}_k)=(k-1)(2n-k)+\binom{n-k+1}{2}.$$
	Then for $n\geq 3(k-1)$, we prove that $$\mathrm{ex}(n,\mathcal{D}_k)\leq \binom{n-k+1}{2}+\frac{17}{6}(k-1)(n-k+1).$$  In addition, we conjecture that for sufficiently large $n$, $$\mathrm{ex}(n,\mathcal{D}_k)=\binom{n}{2}+\frac{3}{2}(k-\frac{4}{3})(n-k+1).$$
	
	\noindent{\bf Keywords:} Saturation number; Extremal number; $k$-strongly connected subdigraph
	
	\section{Introduction}
	
	Given a family of graphs $\mathcal{H}$, a graph $G$ is \emph{$\mathcal{H}$-saturated} if it contains no $H \in \mathcal{H}$ as a subgraph, but the addition of any edge $e\notin E(G)$ results in a subgraph isomorphic to some $H\in \mathcal{H}$. The \emph{saturation number} of $\mathcal{H}$, denoted by $\mathrm{sat}(n,\mathcal{H})$, is the minimum number of edges among all $n$-vertex $\mathcal{H}$-saturated graphs. The \emph{extremal number} of $\mathcal{H}$, denoted by $\mathrm{ex}(n,\mathcal{H})$, is the maximum number of edges among all $n$-vertex $\mathcal{H}$-saturated graphs.
	
	Saturation numbers were initially investigated by Erd\H{o}s et al. \cite{Erdos1964}, who established that $\mathrm{sat}(n, K_{k+1}) = (k-1)n - \binom{k}{2}$. They also characterized the unique extremal graph as the \emph{complete split graph} $S_{n,k}$, which consists of a clique of size $k-1$ and an independent set $I$ of size $n-k+1$, with all possible edges between the clique and $I$, and no other edges. Extremal numbers were studied earlier by Tur$\acute{a}$n~\cite{Turan1941}, who determined that
	$\mathrm{ex}(n, K_{k+1}) = \left\lfloor \frac{k-1}{k} \cdot \frac{n^2}{2} \right\rfloor$ and showed that equality is achieved by the $n$-vertex \emph{complete $k$-partite graph} whose part sizes differ by at most one.
	
	For a positive integer $k$, let $\mathcal{H}_k$ and $\mathcal{H}_k'$ denote the families of \emph{$k$-connected graphs} and \emph{$k$-edge-connected graphs}, respectively. Lai \cite{Lai1990} and Lei et al. \cite{Lei2019} determined the exact value of $\mathrm{sat}(n,\mathcal{H}_k')$, while Mader \cite{Mader1971} and Lei et al. \cite{Lei2019} established the exact value of $\mathrm{ex}(n,\mathcal{H}_k')$.
	
	\begin{thm}
		Let $k\geq 1$ and $n\geq k+1$ be integers. Then
		\begin{itemize}
			\item [\rm{(i)}]\rm{(Lai \cite{Lai1990}; Lei et al. \cite{Lei2019})} $\mathrm{sat}(n,\mathcal{H}_k')=(k-1)(n-1)-\lfloor\frac{n}{k+1}\rfloor\binom{k-1}{2}$;
			\item [\rm{(ii)}]\rm{(Mader \cite{Mader1971}; Lei et al. \cite{Lei2019})} $\mathrm{ex}(n,\mathcal{H}_k')=(k-1)n-\binom{k}{2}$.	
		\end{itemize}
	\end{thm}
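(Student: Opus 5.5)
For part (ii) I would argue by induction on $n$, with a separate count for small orders. Let $f(m)=\binom{m}{2}$ for $m\le k$ and $f(m)=(k-1)m-\binom{k}{2}$ for $m\ge k$; the two expressions agree at $m=k$. The claim is that every $m$-vertex graph with no $k$-edge-connected subgraph has at most $f(m)$ edges.

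\textbf{Small orders.} For $m\le k$ the bound is trivial, since $\binom{m}{2}$ is the maximum number of edges on $m$ vertices.

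\textbf{Inductive step.} For $m>k$, the graph $G$ itself is not $k$-edge-connected. So there is a partition $V(G)=A\cup B$ with $|A|=a$, $|B|=b$, $a,b\ge 1$, and at most $k-1$ edges between $A$ and $B$. Both $G[A]$ and $G[B]$ inherit the property of having no $k$-edge-connected subgraph. Induction then gives
\[e(G)\le f(a)+f(b)+k-1,\]
so it remains to check the numerical inequality $f(a)+f(b)+k-1\le f(a+b)$ for all $a+b=m>k$. I would split into three cases:
\begin{itemize}
\item[(1)] $a,b\ge k$: the inequality reduces to $\binom{k}{2}\ge k-1$.
\item[(2)] $a<k\le b$: it reduces to $\binom{a}{2}+k-1\le (k-1)a$, i.e.\ $(a-1)(2k-2-a)\ge 0$.
\item[(3)] $a,b<k$: use the convexity of $\binom{\cdot}{2}$ together with $a+b\ge k+1$.
\end{itemize}

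\textbf{Sharpness.} The lower bound comes from the complete split graph $S_{n,k}$, which is $K_{k-1}$ joined completely to an independent set of size $n-k+1$. It has $\binom{k-1}{2}+(k-1)(n-k+1)=(k-1)n-\binom{k}{2}$ edges. It has no $k$-edge-connected subgraph, because every vertex of the independent set has degree $k-1$ and can be peeled off, leaving only $K_{k-1}$. It is saturated: adding an edge $uv$ inside the independent set makes the subgraph on $K_{k-1}\cup\{u,v\}$ a $K_{k+1}$ minus the edge $uv$ plus $uv$, i.e.\ $K_{k+1}$ itself, which is $k$-edge-connected.

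For part (i), the upper bound needs a construction. I would take $t=\lfloor n/(k+1)\rfloor$ blocks, each a copy of $K_{k}$ or of a $(k+1)$-vertex graph that is $(k-1)$-edge-connected but contains no $k$-edge-connected subgraph. I would glue these blocks, together with the leftover vertices, in a tree-like fashion along edge cuts of size exactly $k-1$. The gluing must ensure that adding any missing edge raises some cut to $k$ and creates a $k$-edge-connected subgraph. The saving of $\binom{k-1}{2}$ per block should come from each block being nearly complete internally while contributing only $k-1$ edges per outside vertex. I would then verify that the edge count equals the formula.

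For the lower bound I would use the following key observation. In a saturated graph $G$, any nonadjacent $u,v$ must satisfy $\lambda_G(u,v)\ge k-1$, since $G+uv$ contains a $k$-edge-connected subgraph through $uv$. I would plan these steps:
\begin{itemize}
\item[(a)] Partition $V(G)$ into the classes of the relation ``$\lambda(u,v)\ge k-1$''. Vertices in distinct classes are then pairwise adjacent.
\item[(b)] Show that there is essentially one nontrivial class, or that the classes form a clique structure that forces many edges.
\item[(c)] Inside a class, use the Mader-type degree argument: a vertex of degree less than $k-1$ must be adjacent to everything in its class.
\item[(d)] Count edges via a Gomory--Hu tree. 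Each tree edge carries weight at least $k-1$, except between small dense pieces, and each piece of at most $k$ vertices whose internal edges are all present saves at most $\binom{k-1}{2}$ relative to $(k-1)$ per vertex.
\end{itemize}
The main obstacle is step (d): proving that the number of such ``saving'' pieces is at most $\lfloor n/(k+1)\rfloor$. For this I would first try to show that each saving piece, together with its attachments, must involve at least $k+1$ vertices, since otherwise some added edge fails to create a $k$-edge-connected subgraph.
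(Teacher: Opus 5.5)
The paper gives no proof of this theorem; it is quoted from Lai, Lei et al.\ and Mader as background, so your proposal has to stand on its own. Part (ii) does. The induction over a cut of at most $k-1$ edges is Mader's argument, and your three numerical cases are right: in case (3), convexity with $a+b$ fixed lets you take $a=k-1$, and with $j=a+b-k+1\le k-1$ the inequality becomes $\binom{j}{2}\le (k-1)(j-1)$. Also, $S_{n,k}$ is saturated with the right number of edges.

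Part (i), however, is only a plan, with a gap in each direction. For the upper bound the edge count is the easy part. Any graph assembled from $t=\lfloor n/(k+1)\rfloor$ copies of $K_k$ and $n-tk$ further vertices by repeatedly gluing along $k-1$ edges has $t\binom{k}{2}+(n-tk+t-1)(k-1)=(k-1)(n-1)-t\binom{k-1}{2}$ edges. What is not automatic is saturation. It depends on where the connecting edges land, which you never specify. For $k=3$, take $K_4-ab$ on $\{a,b,c,d\}$ and $K_4-a'b'$ on $\{a',b',c',d'\}$, joined by $cc'$ and $dd'$; this has $12$ edges, the claimed value for $n=8$. A $3$-edge-connected subgraph of $G+ca'$ must contain $ca'$, hence all three crossing edges, hence $d$. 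Then $d$ needs $a$ or $b$ as a third neighbour, but both have degree $2$ in $G+ca'$. So this $G$ is not saturated, whereas joining by $aa'$, $bb'$ does work. Choosing the attachments and checking every missing edge is therefore a real part of the proof.

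For the lower bound, steps (a)--(c) give nothing beyond $\lambda(G)\ge k-1$. If some cut $[A,B]$ had at most $k-2$ edges, your key observation would make every $A$--$B$ pair adjacent, so $|[A,B]|=|A||B|\ge n-1\ge k$. Hence there is a single class, (c) is vacuous, and you only get $|E(G)|\ge (k-1)n/2$.

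Step (d) fails as stated. In a Gomory--Hu tree $T$, an edge $xy$ of $G$ crosses exactly the fundamental cuts of the tree edges on the $x$--$y$ path. So $\sum_{e\in E(T)}w(e)=\sum_{xy\in E(G)}d_T(x,y)\ge |E(G)|$, and lower bounds on the weights give no lower bound on $|E(G)|$.

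The usable structure is this. A $k$-edge-connected subgraph cannot cross a $(k-1)$-edge cut, so both sides of a minimum cut are again saturated. Hence $G$ splits recursively into complete pieces $K_{m_1},\dots,K_{m_p}$ with $m_i\le k$, joined by $p-1$ cuts of size $k-1$, and $|E(G)|=(k-1)(n-1)-\sum_i\tfrac12(m_i-1)(2k-2-m_i)$. The whole content of the lower bound is then to prove that saturation across these cuts limits the total saving to $\lfloor n/(k+1)\rfloor\binom{k-1}{2}$; here saturation means an added edge must complete a $k$-edge-connected subgraph using all $k-1$ cut edges.

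This is not a count of saving pieces: $K_{k-1}$ pieces save as much as $K_k$ pieces, and $K_2$ pieces save $k-2$. Nor does it follow from the decomposition alone. $C_4$ is two $K_2$'s glued along two edges, with $4$ edges, but it is not $\mathcal{H}_3'$-saturated; the true minimum for $n=4$, $k=3$ is $5$. This is exactly the step you flag as the main obstacle and leave open, so (i) remains unproved.
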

	
	Wenger \cite{Wenger2014} and Xu et al. \cite{Xu2020} independently showed that $\mathrm{sat}(n,\mathcal{H}_k)=(k-1)n-\binom{k}{2}$ for $n\geq k+1$. Mader \cite{Mader1979} conjectured that for sufficiently large $n$, $\mathrm{ex}(n,\mathcal{H}_k)=\frac{3}{2}(k-\frac{4}{3})(n-k+1)$. In \cite{Mader1972,Mader1979}, Mader confirmed this conjecture for $k\leq 6$ and established an upper bound $\mathrm{ex}(n,\mathcal{H}_k)\leq(1+\frac{\sqrt{2}}{2})(k-1)(n-k+1)$ for sufficiently large $n$. This bound was improved by Yuster \cite{Yuster2003} to $\frac{193}{120}(k-1)(n-k+1)$. Later, Bernshteyn et al. \cite{Bernshteyn2016} further improved this bound to $\frac{19}{12}(k-1)(n-k+1)$.
	
	It is a natural extension to generalize these results to digraphs. Let $D=(V(D),A(D))$ be a \emph{digraph} with \emph{vertex set} $V(D)$ and \emph{arc set} $A(D)$. A digraph is \emph{strict} if it has no loops or parallel arcs. In this paper, we only consider finite strict digraphs. Undefined terminology and notation follow \cite{Bang2009} for digraphs. Let $D$ be a digraph, and let $u,v\in V(D)$. We use $(u,v)$ to denote an arc oriented from $u$ to $v$, and $[u,v]$ to denote a pair of arcs $(u, v)$ and $(v, u)$. Let $D^c$ denote the \emph{complement digraph} of $D$, where $V(D^c)=V(D)$ and $A(D^c) = \{(u,v): (u,v) \notin A(D)\}$. For $X \subseteq A(D^c)$, $D+X$ is the digraph with vertex set $V(D)$ and arc set $A(D) \cup X$. We abbreviate $D+\{e\}$ to $D+e$ for any arc $e\in A(D^c)$. For digraphs $D'$ and $D$, we write $D'\subseteq D$ if $D'$ is a \emph{subdigraph} of $D$. For any $S\subseteq V(D)$ or $S\subseteq A(D)$, $D[S]$ denotes the subdigraph of $D$ \emph{induced} by $S$.
	
	For two disjoint subsets $S, T\subseteq V(D)$, we define the following arc sets:
	\begin{align*}
		&(S,T)=\{(s,t): ~s\in S,~t\in T\},\\
		&(S,T)_D=\{(s,t)\in A(D):~s\in S,~t\in T\}.
	\end{align*}
	The set $[S,T]$ consists of all arcs between $S$ and $T$ in both directions, that is,
	$$[S,T]=(S,T) \cup (T,S)=\{[s,t]:s\in S,t\in T\}.$$
    The arcs in $D$ between $S$ and $T$ are given by
	$$[S,T]_D=[S,T] \cap A(D)= (S,T)_D \cup (T,S)_D.$$
	
	A digraph $D$ is \emph{complete} if it is strict and for every pair of distinct vertices $u,v\in V(D)$, both arcs $(u,v)$ and $(v,u)$ lie in $A(D)$. The complete digraph on $n$ vertices is denoted by $K_n^*$ and is also called an \emph{$n$-clique}. A \emph{tournament} on $n$ vertices, denoted by $T_{n}$, is an orientation of the complete graph $K_n$. A \emph{multipartite tournament} is an orientation of a complete multipartite graph. A digraph is \emph{transitive} if for any three distinct vertices $x,y,z\in V(D)$, whenever $(x,y)$ and $(y,z)$ are arcs in $D$, $(x,z)$ is also an arc in $D$.
	
	Let $v\in V(D)$. The \emph{out-neighborhood} $N_D^+(v)$ denotes the set of vertices $u$ such that $(v,u)\in A(D)$. The \emph{in-neighborhood} $N_{D}^-(v)$ denotes the set of vertices $u$ such that $(u,v)\in A(D)$. The \emph{out-degree} and \emph{in-degree} of $v$ are defined as $d_{D}^+(v)=|N_{D}^+(v)|$ and $d_{D}^-(v)=|N_{D}^-(v)|$, respectively. The intersection $N_D^+(v)\cap N_D^-(v)$ is called the \emph{reciprocal neighborhood} of $v$, and its cardinality $d_D^{\leftrightarrow}(v)=|N_D^+(v)\cap N_D^-(v)|$ is the \emph{reciprocal-degree} of $v$. The \emph{minimum reciprocal-degree} of $D$ is $\delta^{\leftrightarrow}(D)=\min\{d_{D}^{\leftrightarrow}(v): v\in V(D)\}$.
	
	Given two vertices $u,v \in V(D)$, a \emph{$(u,v)$-dipath} is a sequence of distinct vertices $u (=v_0) v_1 \ldots v_t (= v)$ such that $(v_i,v_{i+1}) \in A(D)$ for all $0 \leq i \leq t-1$. We say $u$ and $v$ are \emph{unilaterally connected} if there exists a $(u,v)$-dipath or a $(v,u)$-dipath. A digraph $D$ is \emph{strongly connected} if for every pair of distinct vertices $u, v\in V(D)$, there exists a $(u,v)$-dipath and a $(v,u)$-dipath. A \emph{strong component} of $D$ is a maximal subdigraph that is strongly connected.
	
	A subset $S \subseteq V(D)$ is called a \emph{separator} of $D$ if $D - S$ is not strongly connected or is trivial. Similarly, a subset $W \subseteq A(D)$ is an \emph{arc-cut} if $D - W$ is not strongly connected. The \emph{vertex-strong connectivity} of $D$, denoted by $\kappa(D)$, is the cardinality of a minimum separator of $D$, and the \emph{arc-strong connectivity}, denoted by $\lambda(D)$, is the cardinality of an arc-cut of $D$. A digraph $D$ is \emph{$k$-strongly connected} if $\kappa(D) \geq k$, and \emph{$k$-arc-strongly connected} if $\lambda(D) \geq k$. Given a separator $S$ of $D$, an \emph{$S$-lobe} is a subdigraph of $D$ induced by the union $S$ and the vertex set of a strong component of $D-S$.
	
	Let $\mathcal{D}$ be a family of digraphs. A digraph $D$ is said to be \emph{$\mathcal{D}$-saturated} if it contains no subdigraph isomorphic to any member of $\mathcal{D}$, but for every arc $e\in A(D^c)$, the digraph $D+e$ contains a subdigraph isomorphic to some member of $\mathcal{D}$. The \emph{saturation number} and \emph{extremal number} of $\mathcal{D}$, denoted by $\mathrm{sat}(n, \mathcal{D})$ and $\mathrm{ex}(n, \mathcal{D})$, respectively, are the minimum and maximum numbers of arcs among all $n$-vertex $\mathcal{D}$-saturated digraphs. For a positive integer $k$, let $\mathcal{D}_k$ denote the family of $k$-strongly connected digraphs and $\mathcal{D}_k'$ denote the family of $k$-arc-strongly connected digraphs. The values of $\mathrm{sat}(n, \mathcal{D}_k')$ and $\mathrm{ex}(n, \mathcal{D}_k')$ were determined by Lin et al. \cite{Lin2016} and Anderson et al. \cite{Anderson2016}, respectively.
	
	\begin{thm}
		Let $n$ and $k$ be integers satisfying $k \geq 1$ and $n \geq k+1$. Then
		\begin{itemize}
			\item[\rm{(i)}]\rm{(Lin et al. \cite{Lin2016})} $\mathrm{sat}(n,\mathcal{D}_k')=\binom{n}{2}+(n-1)(k-1)+\lfloor\frac{n}{k+1}\rfloor\big(2k-1-\binom{k+1}{2}\big)$;
			\item [\rm{(ii)}]\rm{(Anderson et al. \cite{Anderson2016})} $\mathrm{ex}(n,\mathcal{D}_{k}')=(k-1)(2n-k)+\binom{n-k+1}{2}$.
		\end{itemize}
	\end{thm}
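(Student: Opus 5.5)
Both parts can be handled the same way: an explicit construction gives one inequality, and a decomposition along a small arc-cut gives the other. Throughout we write $m=n-k+1$.

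\textbf{Part (ii): construction.} Let $D_0$ consist of a $(k-1)$-clique $K=K_{k-1}^*$, a transitive tournament $T_m$ on the remaining $m$ vertices, and all arcs in both directions between $K$ and $V(T_m)$. Its arc count is
$$(k-1)(k-2)+2(k-1)m+\binom{m}{2}=(k-1)(2n-k)+\binom{n-k+1}{2}.$$
It is $\mathcal{D}_k'$-free for the following reason. Take any subdigraph $H$ on at least $k+1$ vertices. If $H$ meets $V(T_m)$, the first vertex of $T_m$ lying in $H$ has in-degree at most $k-1$ in $H$. If $H$ does not meet $V(T_m)$, then $H\subseteq K$ has fewer than $k+1$ vertices, which is impossible. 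So every such $H$ has an arc-cut of size at most $k-1$.

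\textbf{Part (ii): saturation of $D_0$.} The only missing arcs are the backward arcs $(v_j,v_i)$ of $T_m$ with $i<j$. Adding such an arc creates the strongly connected segment $v_iv_{i+1}\cdots v_j$ of $T_m$. Together with $K$ this should give a $k$-arc-strong subdigraph; one checks this with Menger's theorem, routing through the $k-1$ clique vertices plus the new cycle. I expect this to need a small case check when $j-i$ is small. If it fails there, I would instead take $H$ to be the whole of $K$ together with $v_i,\dots,v_j$ and verify that every vertex set $X$ of $H$ has out-arc count at least $k$.

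\textbf{Part (ii): upper bound.} Let $f(n)=(k-1)(2n-k)+\binom{n-k+1}{2}$. We prove $e(D)\le f(n)$ for every $n$-vertex digraph with no $k$-arc-strong subdigraph, by induction on $n$; the base case $n\le k$ uses $e(D)\le n(n-1)\le f(n)$. Since $D$ itself is not $k$-arc-strong, there is a partition $V=X\cup Y$ with $|X|=a$, $|Y|=b$ and $|(X,Y)_D|\le k-1$. Hence
$$e(D)\le e(D[X])+e(D[Y])+ab+(k-1).$$
Apply induction (or the trivial bound $t(t-1)$ when a part has at most $k$ vertices) to $D[X]$ and $D[Y]$. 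It then remains to verify the numerical inequality
$$g(a)+g(b)+ab+k-1\le f(a+b),\qquad g(t)=\min\{t(t-1),f(t)\}.$$
This is a convexity computation: the left side minus the right side is maximized when one part is as small as possible, and the case $a=1$ is tight. The cases $a\le k$ need separate but routine treatment, and this numerical verification is the main technical step of part (ii).

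\textbf{Part (i): construction.} The saturation number requires a sparser extremal structure. Partition the vertices into $\lfloor n/(k+1)\rfloor$ blocks of size $k+1$ (plus leftovers). Inside each block, delete from $K_{k+1}^*$ a carefully chosen set of arcs so that the block becomes "one arc short" of being $k$-arc-strong. Then order the blocks and add all arcs in one direction between them, so that each block contributes the correction term $2k-1-\binom{k+1}{2}$.

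\textbf{Part (i): lower bound, and the main obstacle.} The matching lower bound is the main obstacle. The plan is as follows. In a $\mathcal{D}_k'$-saturated digraph, every non-adjacent ordered pair $(u,v)$ must lie in a subdigraph that becomes $k$-arc-strong after adding $(u,v)$. This forces every such pair to have many common neighbours, so that strong components of "almost $k$-arc-strong" pieces must be nearly complete. I would first show that the condensation structure is a transitive ordering, which accounts for the $\binom{n}{2}$ term. I would then show that each vertex carries $k-1$ extra reciprocal arcs, except for savings of at most $\binom{k+1}{2}-(2k-1)$ per block of size $k+1$. I would bound these savings by a discharging argument over the blocks.
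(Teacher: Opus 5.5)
The paper does not prove this theorem; it is quoted from Lin et al.\ and Anderson et al. Judged on its own, your part (ii) is essentially sound after a small fix, but part (i) has a genuine gap in both the construction and the lower bound.

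\textbf{The part (i) construction is not saturated.} For $k\ge 2$ it fails as soon as there are two blocks. Suppose all arcs between the ordered blocks $B_1,\dots,B_q$ go from earlier to later blocks, and add a missing arc $(v,u)$ with $u\in B_i$, $v\in B_j$, $i<j$. In any subdigraph $H$ of $D+(v,u)$ containing this arc, the set $Z=V(H)\cap(B_{i+1}\cup\cdots\cup B_q)$ has $(v,u)$ as its only out-arc, so $\lambda(H)\le 1$.

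The same cut argument applies to any $\mathcal{D}_k'$-saturated $D$ and any partition $(X,Y)$ with $|(X,Y)_D|\le k-1$. It shows three things: every arc of $(Y,X)$ is present; $|(X,Y)_D|=k-1$ as soon as some arc of $(X,Y)$ is missing; and $D[X]$ and $D[Y]$ are again saturated. So each junction needs exactly $k-1$ arcs in the sparse direction, placed so that every further such arc completes a $k$-arc-strong subdigraph. Your count omits these arcs. With them included, the formula is matched by blocks equal to $K_{k+1}^*$ minus one arc. Designing these junctions is the real content of the construction, and it is missing.

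\textbf{The part (i) lower bound has no mechanism.} ``Many common neighbours'' and ``discharging'' do not amount to an argument. Even the natural recursion $e(D)=e(D[X])+e(D[Y])+|X||Y|+(k-1)$ that follows from the facts above is too weak on its own. For $k=3$, $n=4$ it allows a split into two copies of $K_2^*$, giving $2+2+4+2=10<11=\mathrm{sat}(4,\mathcal{D}_3')$. That digraph is $K_4^*$ minus two arcs and is not saturated. What is needed is a lemma showing that a cut with $k-1$ cross arcs can be saturated only when its sides are large enough. Then the saving $\binom{k+1}{2}-(2k-1)=\binom{k-1}{2}$ can be collected at most once per $k+1$ vertices, which is where $\lfloor n/(k+1)\rfloor$ comes from. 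Without such a lemma, part (i) is a plan rather than a proof.

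\textbf{Part (ii).} Saturation of $D_0$ needs no Menger argument or small-case check. For a missing arc $(v_j,v_i)$, the set $V(K)\cup\{v_i,v_j\}$ spans a $K_{k+1}^*$ in $D_0+(v_j,v_i)$.

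In the upper bound, note that $f(t)\le t(t-1)$ for every $t$, so your $g(t)=\min\{t(t-1),f(t)\}$ is just $f(t)$. Moreover $f(t)<t(t-1)$ for $t\le k-2$, so your base case is false there: for $k\ge 3$, take $K_{k-2}^*$. State the induction for $n\ge k-1$, and bound a part with $t\le k$ vertices by $t(t-1)$. With this piecewise bound the numerics close; take $a\le b$.
\begin{itemize}
\item If $b\le k$, the inequality reduces to $ab\ge k-1+\binom{n-k+1}{2}$. This holds since $ab\ge k(n-k)$ and $ks\ge k-1+\binom{s+1}{2}$ for $1\le s\le k$.
\item If $a\le k<b$, it reduces to $\frac{a(a-3)}{2}+k-1\le a(k-2)$. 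This is true on $1\le a\le k$ by concavity, and tight at $a=1$.
\item If $a,b>k$, there is slack $\binom{k-1}{2}$.
\end{itemize}
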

	
	In this paper, we investigate extremal problems for the family of $k$-strongly connected digraphs $\mathcal{D}_k$. Section \ref{Se2} is devoted to exploring the key structural properties of $\mathcal{D}_k$-saturated digraphs. In Section \ref{Se3}, we prove that $\mathrm{sat}(n,\mathcal{D}_k)=(k-1)(2n-k)+\binom{n-k+1}{2}$. Section \ref{Se4} proposes the conjecture that $\mathrm{ex}(n,\mathcal{D}_k)=\binom{n}{2}+\frac{3}{2}(k-\frac{4}{3})(n-k+1)$, and verifies that $\mathrm{ex}(n,\mathcal{D}_k)\leq \binom{n-k+1}{2}+\frac{17}{6}(k-1)(n-k+1)$ for $n\geq 3(k-1)$.
	
	\section{Properties of $\mathcal{D}_k$-saturated digraphs}\label{Se2}	
	
    In this section, we investigate properties of $\mathcal{D}_k$-saturated digraphs that will be used later. Throughout, we adopt the convention that every complete digraph  of order at most $k$ is $\mathcal{D}_k$-saturated. We therefore assume that $n\geq k+1$ for the remainder of this section, and so $D$ is not complete. We begin with a full characterization of $\mathcal{D}_1$-saturated digraphs.
	
	\begin{lem}\label{l21}
		A digraph $D$ is $\mathcal{D}_1$-saturated if and only if $D$ is an acyclic tournament.
	\end{lem}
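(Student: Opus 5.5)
We need to characterize when $D$ contains no $1$-strongly connected subdigraph, and then use saturation. First we pin down what ``$1$-strongly connected'' means for a subdigraph: $\kappa(D')\ge 1$ requires that deleting the empty set leaves a digraph that is strongly connected and nontrivial. So $D'$ has at least two vertices and is strong, and since a strong digraph on at least two vertices contains a directed cycle, $D$ contains a member of $\mathcal{D}_1$ if and only if $D$ contains a directed cycle (a $2$-cycle $[u,v]$ counts, since $D$ is strict). Hence $D$ is $\mathcal{D}_1$-free exactly when $D$ is acyclic. This translation is the only step requiring care, namely checking the convention about trivial digraphs in the definition of separator; the rest is routine.

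For the forward direction, let $D$ be $\mathcal{D}_1$-saturated. It is acyclic, and in particular it has no 2-cycles, so between any two vertices there is at most one arc. Suppose some pair $u,v$ has no arc at all. Take a topological ordering of $D$ in which every arc goes forward, and say $u$ precedes $v$. Then $D+(u,v)$ still has all arcs going forward, so it is acyclic and hence contains no member of $\mathcal{D}_1$. This contradicts saturation, since $(u,v)\in A(D^c)$. So every pair is joined by exactly one arc, and $D$ is an acyclic tournament.

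For the converse, let $D$ be an acyclic tournament. It contains no member of $\mathcal{D}_1$ because it is acyclic. Any arc $e\in A(D^c)$ must be of the form $(v,u)$ where $(u,v)\in A(D)$, since $D$ is a tournament. Then $D+e$ contains the 2-cycle $[u,v]$, which is a strong digraph on two vertices and hence a member of $\mathcal{D}_1$. So $D$ is $\mathcal{D}_1$-saturated. (The case $n\le 1$ is covered by the stated convention.)
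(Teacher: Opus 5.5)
Your proof is correct. The reduction of $\mathcal{D}_1$-freeness to acyclicity and the converse direction (a missing arc is the reverse of a present one, so adding it creates the $2$-cycle $[u,v]$) match the paper's argument. You are more careful than the paper here: you use the ``or is trivial'' clause in the definition of a separator to check that $\kappa(D')\ge 1$ means exactly that $D'$ is strong and nontrivial, whereas the paper just asserts this.

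The forward direction takes a slightly different route. The paper applies saturation to both $(u,v)$ and $(v,u)$ for a non-adjacent pair. This yields a $(v,u)$-dipath and a $(u,v)$-dipath in $D$, which it glues into a closed walk and hence a cycle. You instead fix a topological ordering and note that adding the forward-pointing arc keeps $D$ acyclic, so saturation already fails for that single arc. Your version is shorter and identifies which of the two missing arcs violates saturation, but it relies on the existence of a topological ordering. The paper's version is self-contained, using only path concatenation. Both are complete.
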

	\noindent{\bf Proof.} ($\Leftarrow$) Let $D$ be an acyclic tournament. Then $D$ contains no non-trivial strongly connected subdigraph. Furthermore, since $D$ is a tournament, for any arc $e=(u,v)\in A(D^{c})$, its reverse arc $e'=(v,u)$ belongs to $A(D)$. Adding $e$ to $D$ yields a directed cycle $u \overset{e}{\rightarrow} v \overset{e'}{\rightarrow} u$, which is a strongly connected subdigraph of $D+e$. Therefore, $D$ is $\mathcal{D}_1$-saturated.
	
	\noindent{($\Rightarrow$)} Assume that $D$ is $\mathcal{D}_1$-saturated. Then $D$ contains no non-trivial strongly connected subdigraph. Therefore, $D$ is acyclic, so for any pair of distinct vertices $u,v\in V(D)$, at most one of the arcs in the set $\{(u,v),(v,u)\}$ belongs to $A(D)$. It remains to show that for any pair of distinct vertices $u,v\in V(D)$, exactly one of $(u, v)$ and $(v, u)$ belongs to $A(D)$, thereby implying that $D$ is an acyclic tournament.
	
	We may assume that there exist two distinct vertices $u,v\in V(D)$ such that neither $(u,v)$ nor \\$(v,u)$ is in $A(D)$. Since $D$ is $\mathcal{D}_1$-saturated, $D+(u,v)$ contains a directed cycle $C_1$. As $D$ is acyclic, $C_1$ must use the arc $(u,v)$. Consequently, $P_1=C_1-(u,v)$ is a $(v,u)$-dipath.
	
	Similarly, $D+(v,u)$ contains a directed cycle $C_2$. Since $D$ is acyclic, it follows that the arc $(v, u)$ must be in $C_2$, and so $P_2=C_2-(v,u)$ is a $(u,v)$-dipath.
	
	Since $P_1$ is a $(v,u)$-dipath and $P_2$ is a $(u,v)$-dipath, $D[V(P_1)\cup V(P_2)]$ contains a directed cycle, contradicting the acyclicity of $D$. Thus, $D$ must be an acyclic tournament.\hfill\qedsymbol
	
	The structure of $\mathcal{D}_1$-saturated digraphs is well-characterized. We next turn our attention to $\mathcal{D}_k$-saturated digraphs for $k \geq 2$. The following lemma determines the connectivity of such digraphs.
	
	\begin{lem}\label{l22}
		Let $k\geq 2$ and $n\geq k+1$ be integers. If $D$ is a $\mathcal{D}_k$-saturated digraph with $n$ vertices, then $\kappa(D)=k-1$.
	\end{lem}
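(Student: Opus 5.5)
The plan is to prove the two inequalities $\kappa(D)\le k-1$ and $\kappa(D)\ge k-1$ separately.

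The upper bound comes directly from the definition. Since $D$ is $\mathcal{D}_k$-saturated, $D$ itself contains no $k$-strongly connected subdigraph. In particular $D$ is not $k$-strongly connected, so $\kappa(D)\le k-1$.

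For the lower bound, $D$ is not complete because $n\ge k+1$, so there is an arc $e=(u,v)\in A(D^c)$. By saturation, $D+e$ contains a $k$-strongly connected subdigraph $H$. Since $D$ has no such subdigraph, $e\in A(H)$.

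\emph{Claim: removing a single arc from a $k$-strongly connected digraph leaves a $(k-1)$-strongly connected digraph.} This is the digraph version of the standard fact that deleting an edge lowers connectivity by at most one. To prove it, suppose $S$ is a separator of $H-e$ with $|S|\le k-2$.
\begin{itemize}
\item If $H-e-S$ is trivial, then $|V(H)|\le k-1$. This is impossible, since $H$ being $k$-strongly connected forces $|V(H)|\ge k+1$.
\item Otherwise $H-e-S$ is not strongly connected. Take a vertex partition $(A,B)$ of $V(H)\setminus S$ with no arc from $A$ to $B$ in $H-e-S$. Since $H-S$ is strongly connected, $e$ must be the only arc from $A$ to $B$, so $u\in A$ and $v\in B$.
\item If $|A|\ge 2$, then $S\cup\{u\}$ separates $H$, and it has fewer than $k$ vertices. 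The same holds with $S\cup\{v\}$ if $|B|\ge 2$. Either way this contradicts $\kappa(H)\ge k$.
\item Otherwise $A=\{u\}$ and $B=\{v\}$, so $|V(H)|\le k$, again a contradiction.
\end{itemize}
Hence $\kappa(H-e)\ge k-1$, and $H-e\subseteq D$.

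It remains to pass from the subdigraph $H-e$ to all of $D$. Here we use saturation at every non-arc, so that connectivity holds globally. Suppose for contradiction that $D$ has a separator $S$ with $|S|\le k-2$; if $k=2$ this means $D$ itself is not strong. Note that $|V(D)|\ge k+1>|S|+1$, so $D-S$ is nontrivial and hence not strongly connected. Take a partition $(A,B)$ of $V(D)\setminus S$ with no arcs from $A$ to $B$, and pick $a\in A$, $b\in B$. Then $e=(a,b)\notin A(D)$. As above, $D+e$ contains a $k$-strongly connected $H\ni e$.

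In $H-S$, every arc from $A\cap V(H)$ to $B\cap V(H)$ is $e$. Since $|S\cap V(H)|\le k-2$, the graph $H-S$ is $2$-strongly connected. But then removing the single vertex $a$ (or $b$) from it disconnects it, because $e$ is the only crossing arc. This requires both sides to contain more than one vertex. When a side is a singleton, we instead count vertices of $H$: $|V(H)|\le |S|+2\le k$, a contradiction.

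The main obstacle is handling these degenerate small-side cases carefully. The cleanest route is to note that $S\cup\{a\}$ or $S\cup\{b\}$ is a separator of $H$ of size at most $k-1$, or else $H$ is too small. Combining the two bounds gives $\kappa(D)=k-1$. The claim above about $H-e$ is not needed for this final argument, but it gives an alternative viewpoint.
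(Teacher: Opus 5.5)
Your proof is correct and is essentially the paper's argument: assuming a separator $S$ with $|S|\le k-2$, you add a missing arc $(a,b)$ across the cut and observe that in the resulting $k$-strongly connected $H$ either $(S\cap V(H))\cup\{a\}$ or $(S\cap V(H))\cup\{b\}$ is a separator of size at most $k-1$, or both sides are singletons and $|V(H)|\le |S|+2\le k$. Your choice of a cut with no arcs from $A$ to $B$ is slightly cleaner than the paper's ``without loss of generality'' step; the arc-deletion claim is not needed, and your remark that \emph{both} sides must have more than one vertex is a slip (one side suffices), which your final case split already repairs.
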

	
	\noindent{\bf Proof.} Since $D$ is $\mathcal{D}_k$-saturated, it contains no $k$-strongly connected subdigraph, and so $\kappa(D) \leq k-1$. We only need to prove that $\kappa(D)\geq k-1$.
	
	On th contrary, assume that $\kappa(D)\leq k-2$. Let $S$ be a minimum separator of $D$. Then $|S|=\kappa(D)\leq k-2$. Let $B_1$ be a strong component of $D-S$, and let $B_2=D-(V(B_1)\cup S)$. Therefore, there exist vertices $v_1\in V(B_1)$ and $v_2\in V(B_2)$ such that either $(v_1,v_2)\notin A(D)$ or $(v_2,v_1)\notin A(D)$. 	
	
	Assume, without loss of generality, that $(v_1,v_2)\in A(D^c)$. Since $D$ is $\mathcal{D}_k$-saturated, there exists a digraph $D'\subseteq D+(v_1,v_2)$ such that $\kappa(D')\geq k$, and so $|V(D')|\geq k+1$. Since $D$ contains no $k$-strongly connected subdigraph, $D'$ cannot be a subdigraph of $D$ and $(v_1,v_2)\in A(D')$. Moreover, $V(D')\cap V(B_1)\neq \emptyset$ and $V(D')\cap V(B_2)\neq \emptyset$, so $V(D')\cap S$ forms a separator of $D'-(v_1,v_2)$.
	
	If $|V(B_1)\cap V(D')|=|V(B_2)\cap V(D')|=1$, then $|V(D')|\leq |S|+2\leq k$, contradicting $|V(D')|\geq k+1$. Thus, at least one of $|V(B_1)\cap V(D')|$ and $|V(B_2)\cap V(D')|$ is at least $2$. Without loss of generality, assume that $|V(B_1)\cap V(D')|\geq 2$. Define $S'=(V(D')\cap S)\cup \{v_1\}$. Since $|V(B_1)\cap V(D')|\geq 2$ and $V(D')\cap S$ is a separator of $D'-(v_1,v_2)$, $S'$ is a separator of $D'$. This yields a contradiction:
	$$k \leq \kappa(D') \leq |S'|\leq |S|+1 \leq k-1. $$
	This contradiction completes the proof. \hfill\qedsymbol
	
	Let $D$ be a non-complete $\mathcal{D}_k$-saturated digraph. By Lemma \ref{l22}, $\kappa(D)=k-1$. Since $D$ is not complete, it admits a minimum separator $S$ with $|S|=k-1$. Bang-Jensen et al. \cite{Bang2009} showed that the strong components of $D-S$ can be labeled $B_1,B_2,\ldots,B_t$ such that there is no arc from $B_j$ to $B_i$ whenever $j>i$. We say $B_1$ is the \emph{source strong component} of $D-S$ and $B_t$ is the \emph{sink strong component} of $D-S$. This ordering $(B_1,B_2,\ldots,B_t)$ is called an \emph{acyclic ordering} of the strong components of $D-S$.
	
	\begin{lem}\label{l23}
		Let $n$ and $k$ be integers such that $k\geq 2$ and $n\geq k+1$. Let $D$ be a $\mathcal{D}_k$-saturated digraph of order $n$. Let $S$ be a minimum separator of $D$, and let $(B_1,B_2,\ldots,B_t)$ denote an acyclic ordering of the strong components of $D-S$. Define $D'$ as the digraph with $V(D')=\bigcup_{i=1}^t V(B_i)~\text{and}~A(D')=\{(u,v)\in A(D): u\in V(B_i), v\in V(B_j)~\text{for all}~ 1\leq i < j\leq t\}.$
		Then $D'$ is a $t$-partite tournament.
	\end{lem}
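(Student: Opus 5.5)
The plan is to prove that every forward arc is present. The parts of $D'$ are the vertex sets $V(B_1),\ldots,V(B_t)$, and $D'$ has no arcs inside a part. Since $(B_1,\ldots,B_t)$ is an acyclic ordering, $D$ has no arc from $B_j$ to $B_i$ for $j>i$. So for $u\in V(B_i)$ and $v\in V(B_j)$ with $i<j$, the only possible arc of $D'$ between $u$ and $v$ is $(u,v)$, and at most one arc joins them. Hence $D'$ is a $t$-partite tournament exactly when $(u,v)\in A(D)$ for every such pair. I will prove this by contradiction, using saturation in the same spirit as Lemma \ref{l22}.

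Suppose $(u,v)\in A(D^c)$ for some $u\in V(B_i)$, $v\in V(B_j)$ with $i<j$. Since $D$ is $\mathcal{D}_k$-saturated, there is a subdigraph $H\subseteq D+(u,v)$ with $\kappa(H)\geq k$. Because $D$ itself contains no $k$-strongly connected subdigraph, $H$ must use the arc $(u,v)$, so $u,v\in V(H)$.

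The key step is to show that in $(D+(u,v))-S$ there is still no $(v,u)$-dipath. In $D-S$, every arc between different strong components goes from a lower index to a higher index. The added arc $(u,v)$ also goes from index $i$ to the larger index $j$. Therefore, along any dipath in $(D+(u,v))-S$, the component index never decreases. A dipath starting at $v\in V(B_j)$ thus cannot reach $u\in V(B_i)$ with $i<j$.

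Passing to the subdigraph $H$, it follows that $H-(S\cap V(H))$ has no $(v,u)$-dipath. This digraph contains both $u$ and $v$, so it is non-trivial and not strongly connected. Hence $S\cap V(H)$ is a separator of $H$, and
$$k\leq \kappa(H)\leq |S\cap V(H)|\leq |S|=k-1,$$
using $|S|=\kappa(D)=k-1$ from Lemma \ref{l22}. This contradiction shows that every forward arc lies in $D$, so $D'$ is a $t$-partite tournament.

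The only delicate point is the claim that adding the forward arc $(u,v)$ does not create a backward path. This follows directly from the monotonicity of the component index along dipaths, as argued above.
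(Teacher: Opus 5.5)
Your proof is correct and follows the same route as the paper. Both assume a forward arc $(u,v)$ is missing, take a $k$-strongly connected $H\subseteq D+(u,v)$ that must use $(u,v)$, and conclude that $S\cap V(H)$ separates $H$, contradicting $|S|=k-1$; your observation that the component index never decreases along dipaths just spells out the step the paper calls ``clear.''
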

	
	\noindent{\bf Proof.} Since $(B_1,B_2,\ldots, B_t)$ is an acyclic ordering of the strong components of $D-S$, there are no arcs from $B_j$ to $B_i$  in $D$ for any $j > i$. We claim that for all $1 \leq i < j \leq t$, every possible arc from $B_i$ to $B_j$ is present in $D$, and thus in $D'$. On the contrary, assume that there exist vertices $u \in B_i$ and $v \in B_j$ with $i<j$ such that $(u,v)\notin A(D)$. Then $(u,v)\in A(D^c)$, and by the $\mathcal{D}_k$-saturation of $D$, $D+(u,v)$ contains a $k$-strongly connected subdigraph $H$ and $(u,v) \in A(H)$. It is clear that $S \cap V(H)$ is a separator of $H$ satisfying $|S\cap V(H)| \leq |S|=k-1$, contradicting that $H$ is $k$-strongly connected. Hence, the arc $(u,v)$ must be present in $D$, and so $(u,v) \in A(D')$. This proves that $D'$ is a $t$-partite tournament. \hfill\qedsymbol
	
	Let $D$ be a non-complete $\mathcal{D}_k$-saturated digraph of order $n$. By Lemma \ref{l22}, $D$ contains a separator $S $ with $|S|=k-1$, and let $B_1$ be the source strong component of $D-S$ and $B_2=D-(S\cup V(B_1))$. We call $(S,D_1, D_2)$ a \emph{separation} of $D$, where $D_1=D[S\cup V(B_1)]$ and $D_2=D[S\cup V(B_2)]$.
	
	\begin{lem}\label{l24}
		Let $n$ and $k$ be integers with $k\geq 2$ and $n\geq k+1$. Let $D$ be a $\mathcal{D}_k$-saturated digraph. Assume $(S,D_1,D_2)$ is a separation of $D$, then each of the following must hold.
		\begin{itemize}
			\item[\rm{(i)}] If $e\in \bigcup_{i=1}^2 A(D_i^c)$, then any $k$-strongly connected subdigraph $D'$ of $D+e$ is contained in either $D_1+e$ or $D_2+e$. Furthermore, if $e\in A(D_{i}^c)\setminus A((D[S])^c)$, then $D'$ is a subdigraph of $D_i+e$, where $i=1,2$.
			\item[\rm{(ii)}] If $D[S]$ is a complete digraph, then both $D_1$ and $D_2$ are $\mathcal{D}_k$-saturated.
		\end{itemize}
	\end{lem}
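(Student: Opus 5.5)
For part (i), let $e\in A(D_1^c)\cup A(D_2^c)$ and let $D'$ be a $k$-strongly connected subdigraph of $D+e$. Write $B_1$ for the source strong component of $D-S$ and $B_2=D-(S\cup V(B_1))$. The endpoints of $e$ lie in $V(D_1)$ or in $V(D_2)$, so $e$ never joins $B_1$ to $B_2$. Hence in $D+e$ there is still no arc from $V(B_2)$ to $V(B_1)$.

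Suppose $D'$ meets both $B_1$ and $B_2$. Then $D'-(S\cap V(D'))$ has no arc from its $B_2$-part to its $B_1$-part, so it is not strongly connected. Thus $S\cap V(D')$ is a separator of $D'$. Its size is at most $k-1<k\le\kappa(D')$, which is a contradiction. Therefore $V(D')\subseteq S\cup V(B_i)$ for some $i$, and so $D'\subseteq D_i+e$.

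For the refinement, suppose $e\in A(D_i^c)\setminus A((D[S])^c)$. Then $e$ is not an arc inside $S$, so $e$ has an endpoint in $V(B_i)$. Since $D$ has no $k$-strongly connected subdigraph, $D'$ must contain $e$. Consequently $D'$ meets $B_i$, and by the previous paragraph $D'\subseteq D_i+e$. The only point to watch is that $e$ is never a $B_1$–$B_2$ pair; this holds because $e$ is a non-arc of $D_1$ or of $D_2$.

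For part (ii), assume $D[S]$ is complete. $D_i$ is a subdigraph of $D$, so it contains no $k$-strongly connected subdigraph. Now take any $e\in A(D_i^c)$. Since $D[S]$ is complete, $e\notin A((D[S])^c)$, so $e$ has an endpoint in $B_i$.

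Because $D$ is $\mathcal{D}_k$-saturated, $D+e$ contains a $k$-strongly connected $D'$. By (i), $D'\subseteq D_i+e$. Hence $D_i$ is $\mathcal{D}_k$-saturated.

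A small technical point remains when $D_i$ is complete, for instance if $|B_i|=1$ and all arcs to $S$ are present. In that case $D_i$ has no non-arcs and order at most $k$, so it is saturated by convention. It is at most $k$ because $D_i$ complete and $\mathcal{D}_k$-free forces $|V(D_i)|\le k$.
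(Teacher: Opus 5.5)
Your proof is correct and follows essentially the same route as the paper. In (i) you show that if $D'$ meets both $B_1$ and $B_2$, then $S\cap V(D')$ is a separator of $D'$ of size at most $k-1$; in (ii) you treat the case of a complete $D_i$ (whose order is then at most $k$) separately, and otherwise apply (i) to each non-arc of $D_i$, which has an endpoint in $B_i$ because $D[S]$ is complete. Along the way you make explicit two points the paper leaves implicit: $e$ cannot create an arc from $B_2$ to $B_1$, and $D'$ must contain $e$ because $D$ itself is $\mathcal{D}_k$-free.
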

	
	\noindent{\bf Proof.} Since $D$ is $\mathcal{D}_k$-saturated, for any arc $e\in A(D^c)$, the digraph $D+e$ contains a $k$-strongly connected subdigraph $D'$.
	
	\noindent{(i)} Let $e\in \bigcup_{i=1}^2 A(D_i^c)$ and $J=\{j\in\{1,2\}: V(D')\cap (V(D_j)-S)\neq \emptyset\}$. Suppose to the contrary that $|J|=2$, together with $e\in \bigcup_{i=1}^2 A(D_i^c)$, implies that $S\cap V(D')$ is a separator of $D'$. Hence, $$k\leq \kappa(D')\leq |S\cap V(D')|\leq |S|=k-1,$$ a contradiction. Therefore, we obtain that $|J|=1$ and $D'$ is a subdigraph of $D_j+e$ for $V(D')\cap (V(D_j)-S)\neq \emptyset$, where $j\in J$. If $e\in A(D_i^c)\setminus A((D[S])^c)$, then $V(D')\cap (V(D_i)-S)\neq \emptyset$ and $V(D')\cap (\bigcup_{j\neq i}V(D_j)-S)=\emptyset$, and so $D'$ is a subdigraph of $D_i+e$.
	
	\noindent{(ii)} Suppose $D[S]$ is a complete digraph. If $D_1$ is complete, then $D_1\cong K_k^*$, and so $D_1$ is $\mathcal{D}_k$-saturated. Assume that $D_1$ is not complete, every arc $e\in A(D_1^c)$ is incident with at least one vertex in $V(D_1)-S$, by (i), there exists a $k$-strongly connected subdigraph $D'$ of $D + e$ contained in $D_1 + e$. Since $D'$ is also a subdigraph of $D_1 + e$, and $e$ is arbitrary, $D_1$ is $\mathcal{D}_k$-saturated. By a similar analysis, we show that $D_2$ is also $\mathcal{D}_k$-saturated. This proves (ii).\hfill\qedsymbol
	
	\section{Saturation number of $\mathcal{D}_k$}\label{Se3}
	
	The \emph{join} of two digraphs $D_1$ and $D_2$, denoted by $D_1\vee D_2$, is the digraph with the vertex set $V(D_1 \vee D_2) = V(D_1)\cup V(D_2)$ and arc set $$A(D_1\vee D_2)=A(D_1)\cup A(D_2)\cup [V(D_1),V(D_2)].$$
	
	An undirected $k$-tree is any graph that is either $K_k$ or is obtained by joining a new vertex to a complete graph $K_k$ in a $k$-tree. A standard characterization of undirected $k$-trees can be found in \cite{Rose1974}. We now define the directed version of $k$-trees, which will be the main object of this section.
	
	\begin{Def}\label{d31}
		Let $n$ and $k$ be integers such that $k\geq 1$ and $n\geq k$. The digraph $T_{k}(n)$ is defined recursively as follows:
		\begin{itemize}
			\item[\rm{(i)}] $T_k(k)$ is the complete digraph $K_k^*$.
			\item[\rm{(ii)}] For $n > k$, given $T_k(n-1)$ and let $K$ be a complete subdigraph of $T_k(n-1)$ on $k$ vertices. Then $T_k(n)$ is the digraph with the vertex set $V(T_k(n)) = V(T_k(n-1)) \cup \{v_n\}$ and the arc set
			$$A(T_k(n)) = A(T_k(n-1)) \cup [v_n,V(K)] \cup (v_n, V(T_k(n-1)) \setminus V(K)),~ or$$
			$$A(T_k(n)) = A(T_k(n-1)) \cup [v_n,V(K)] \cup (V(T_k(n-1)) \setminus V(K),v_n).~~~~$$
		\end{itemize}
	\end{Def}
	
	The family $\mathcal{T}_{k-1}(n)$ consists of all directed $(k-1)$-trees with $n$ vertices. To show that every directed $(k-1)$-tree is $\mathcal{D}_k$-saturated, we now explore some properties of these digraphs.
	
	\begin{lem}\label{l32}
		Let $n$ and $k$ be integers with $k \geq 2$ and $n \geq k$. If $D \in \mathcal{T}_{k-1}(n)$, then $\delta^{\leftrightarrow}(D)=k-1$, $\kappa(D)=k-1$, and every minimum separator of $D$ is a $(k-1)$-clique.
	\end{lem}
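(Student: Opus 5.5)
Induction on $n$, following the recursive construction in Definition~\ref{d31}. The base case $n=k$ is immediate: $D=K_{k}^*$ has $\delta^{\leftrightarrow}=k-1$. Under the convention that a separator may leave a trivial digraph, $\kappa(K_k^*)=k-1$, and the only minimum separators are the $(k-1)$-subsets, which induce $(k-1)$-cliques.

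For the inductive step, let $D\in\mathcal{T}_{k-1}(n)$ with $n>k$. Then $D$ is obtained from some $D_0\in\mathcal{T}_{k-1}(n-1)$ by adding a vertex $v$, a $(k-1)$-clique $K\subseteq D_0$ with $[v,V(K)]\subseteq A(D)$, and all remaining arcs at $v$ oriented uniformly, say from $v$ to $V(D_0)\setminus V(K)$ (the other case is symmetric).

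\textbf{Reciprocal degree.} The vertex $v$ has reciprocal neighbourhood exactly $V(K)$, so $d^{\leftrightarrow}_D(v)=k-1$. For every other vertex, the reciprocal degree can only increase when passing from $D_0$ to $D$. Hence $\delta^{\leftrightarrow}(D)=k-1$.

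\textbf{Connectivity at most $k-1$.} The set $V(K)$ is a separator of $D$: in $D-V(K)$ the vertex $v$ has only out-arcs, and $D-V(K)$ has at least two vertices, so it is not strong.

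\textbf{Connectivity at least $k-1$.} Let $S$ be a set with $|S|\le k-2$; I will show $D-S$ is strong.
\begin{itemize}
\item By induction $\kappa(D_0)=k-1$, and $D_0$ has at least $k$ vertices. Hence $D_0-S$ is strong. (If $D_0-S$ were trivial, then $|V(D_0)|\le k-1$, which is false.)
\item Since $|S|<|V(K)|$, some vertex $u\in V(K)\setminus S$ survives, and $[v,u]$ is a pair of arcs.
\item So $v$ lies on a $2$-cycle with a vertex of the strong digraph $D_0-S$, and therefore $D-S$ is strong.
\end{itemize}
Thus $\kappa(D)=k-1$.

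\textbf{Minimum separators are cliques.} This is the main step. Let $S$ be a separator of $D$ with $|S|=k-1$. I split into cases according to whether $v\in S$.

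\emph{Case $v\notin S$.}
\begin{itemize}
\item Suppose first that $V(K)\not\subseteq S$. Pick $u\in V(K)\setminus S$; the pair $[v,u]$ attaches $v$ to $D_0-S$. If $D_0-S$ were strong, $D-S$ would be strong, so $D_0-S$ is not strong. Since $|V(D_0)-S|\ge 1$, either $D_0-S$ is non-strong or it is a single vertex.
  \begin{itemize}
  \item If $D_0-S$ is not strong, then $S$ is a minimum separator of $D_0$, so $S$ is a clique by induction.
  \item If $D_0-S$ is trivial, then $|V(D_0)|=k$, so $D_0=K_k^*$ and $S$ is again a clique.
  \end{itemize}
\item If $V(K)\subseteq S$, then $S=V(K)$ because $|S|=k-1$, and $S$ is a clique.
\end{itemize}

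\emph{Case $v\in S$.} Then $D-S=D_0-(S\setminus\{v\})$, and $|S\setminus\{v\}|=k-2<\kappa(D_0)$. So $D-S$ is strong, and it is not trivial since $n-(k-1)\ge 2$. Hence this case cannot occur.

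Together these cases show that every minimum separator of $D$ is a $(k-1)$-clique, which completes the induction.
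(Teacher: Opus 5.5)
Your proof is correct and follows essentially the same route as the paper. Both use induction along the recursive construction, bound the reciprocal degree via the new vertex $v$ and the old vertices, take $V(K)$ as a separator, and split on $v\in S$ versus $v\notin S$ (then $S=V(K)$ or $S$ is a minimum separator of $D_0$) to get the clique property. Your explicit treatment of the ``trivial'' clause in the separator definition is slightly more careful than the paper's; the sub-case where $D_0-S$ is a single vertex is actually vacuous, since $D-S$ would then be a strong $2$-cycle, but including it does no harm.
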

	
	\noindent{\bf Proof.} We proceed by induction on $n$. For $n=k$, we have $D\cong K_k^*$, and so the lemma holds trivially.
	
	Now assume that $n \geq k+1$ and the lemma holds for each value smaller than $n$. By Definition \ref{d31}, $D$ is constructed from some $H\in\mathcal{T}_{k-1}(n-1)$ by adding a new vertex $v_n$, where $N_D^+(v_n) \cap N_D^-(v_n)$ forms a $(k-1)$-clique $K$ in $H$, and $v_n$ has unidirectional arcs to or from each vertex in $V(H) \setminus V(K)$. Without loss of generality, we assume that $v_n$ has only out-arcs to all vertices in $V(H) \setminus V(K)$.
	
	Firstly, we prove that $\delta^{\leftrightarrow}(D)=k-1$. By Definition \ref{d31}, we have $d_{D}^{\leftrightarrow}(v_n)=k-1$, so $\delta^{\leftrightarrow}(D)\leq d_{D}^{\leftrightarrow}(v_n)=k-1$. By the induction hypothesis, we have $\delta^{\leftrightarrow}(H)=k-1$, so $\delta^{\leftrightarrow}(D)\geq \min\{d_{D}^{\leftrightarrow}(v_n),\delta^{\leftrightarrow}(H)\}=k-1$. Thus, $\delta^{\leftrightarrow}(D)=k-1$.
	
	Next, we show that $\kappa(D)=k-1$. Since $V(K)$ is a separator of $D$, we have $\kappa(D) \leq |V(K)| = k-1$. It remains to prove that $\kappa(D) \ge k-1$. Assume that $S \subseteq V(D)$ is a separator of $D$ with $|S| \leq k-2$. We consider two cases.
	
	\noindent{\bf Case 1. $v_n\in S$.}
	
	Let $S'=S\setminus\{v_n\}$. Then $|S'|\leq k-3$ and $D-S=D-v_n-S'=H-S'$ is not strongly connected, which contradicts $\kappa(H)=k-1$.
	
	\noindent{\bf Case 2. $v_n\notin S$.}
	
	In this case, $S\subseteq V(H)$. Because $|S|\leq k-2<\kappa(H)=k-1$, $H-S$ is strongly connected. As $|S|\leq k-2$ and $|V(K)|=k-1$, there exists $w\in V(K)\setminus S$ such that $v_n$ has bidirectional arcs with $w$. Consequently, $D-S$ is strongly connected, also a contradiction. Therefore, $\kappa(D)=k-1$.
	
	Now we demonstrate that every minimum separator of $D$ is a $(k-1)$-clique. Let $S \subseteq V(D)$ be a minimum separator of $D$. Then $|S|=k-1$. Firstly, we show that $v_n\notin S$. If $v_n \in S$, then $S'=S \setminus\{v_n\}$ satisfies $|S'|=k-2$. Since $\kappa(H)=k-1$, we have $H-S'$ is still strongly connected, implying that $D-S=D-v_n-S'=H-S'$ is strongly connected, which contradicts that $S$ is a separator of $D$. Hence $v_n\notin S$, and so $S\subseteq V(H)$.
	
	If $S=V(K)$, then $S$ is a $(k-1)$-clique, and we are done. If $S\neq V(K)$, we claim that $S$ is a separator of $H$. Since $|S|=k-1$, $|K|=k-1$ and $S \neq V(K)$, we have $|V(K) \setminus S| \geq 1$. Assume, to the contrary, that $H-S$ is strongly connected. Pick any $w\in V(K)\setminus S$, then $v_n$ has bidirectional arcs with $w$. Thus, $D - S$ would be strongly connected, contradicting that $S$ is a separator of $D$. Therefore, $S$ is a separator of $H$ with $|S|=k-1$. By the induction hypothesis applied to $H$, $S$ is a $(k-1)$-clique. This completes the proof. \hfill\qedsymbol
	
	Based on the connectivity and degree properties established in Lemma \ref{l32}, we show that every directed $(k-1)$-tree contains at least two vertices with minimum reciprocal-degree $k-1$.
	
	\begin{lem}\label{l33}
		Let $k \geq 2$ and $n \geq k+1$ be integers. If $D \in \mathcal{T}_{k-1}(n)$, then there exists a vertex subset $U \subseteq V(D)$ such that:
		\begin{itemize}
			\item [\rm{(i)}] $|U| \geq 2$;
			\item [\rm{(ii)}] For every $u \in U$, $d_{D}^{\leftrightarrow}(u)=|N_D^+(u) \cap N_D^-(u)| = k-1$;
			\item [\rm{(iii)}] $D[U]$ is an acyclic tournament.
		\end{itemize}
	\end{lem}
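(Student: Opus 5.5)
We argue by induction on $n$, following the recursive construction of Definition \ref{d31}, and we prove the stronger statement that $U$ can be chosen with $|U|\ge 2$, with every vertex of $U$ of reciprocal-degree exactly $k-1$, and with $D[U]$ an acyclic tournament.

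\textbf{Base case $n=k+1$.} Here $D$ is obtained from $K_k^*$ by adding $v_{k+1}$, joined bidirectionally to a $(k-1)$-clique $K$. Thus $V(K_k^*)\setminus V(K)$ consists of a single vertex $w$, and the pair $v_{k+1},w$ is joined by exactly one arc. Hence $d_D^{\leftrightarrow}(v_{k+1})=k-1$ and $d_D^{\leftrightarrow}(w)=k-1$, since $w$ loses reciprocity only with $v_{k+1}$. Taking $U=\{v_{k+1},w\}$, the digraph $D[U]$ is a single arc, which is an acyclic tournament.

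\textbf{Inductive step.} Let $n\ge k+2$ and write $D$ as $H\in\mathcal{T}_{k-1}(n-1)$ plus the new vertex $v_n$, whose reciprocal neighbourhood is a $(k-1)$-clique $K$ of $H$. By symmetry we may assume $v_n$ sends single arcs to every vertex of $V(H)\setminus V(K)$. Adding $v_n$ changes reciprocal-degrees only on $V(K)$, where each increases by one. For a vertex $x\in V(H)\setminus V(K)$ we therefore have $d_D^{\leftrightarrow}(x)=d_H^{\leftrightarrow}(x)$, and $d_D^{\leftrightarrow}(v_n)=k-1$.

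By induction, $H$ has a set $U_H$ with the three properties. Set $U=(U_H\setminus V(K))\cup\{v_n\}$. Every vertex of $U$ then has reciprocal-degree $k-1$ in $D$. The digraph $D[U]$ is an acyclic tournament: $D[U_H\setminus V(K)]$ is an induced subtournament of an acyclic tournament, and $v_n$ is a source dominating all of it, so no cycle can be created and every pair is joined by exactly one arc. It remains to show $|U|\ge 2$, that is, $U_H\not\subseteq V(K)$.

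\textbf{Main obstacle: $U_H\subseteq V(K)$.} This cannot be excluded directly, because two vertices of a clique are joined reciprocally, so $|U_H\cap V(K)|\le 1$. To control this, we strengthen the induction hypothesis: every directed $(k-1)$-tree on at least $k+1$ vertices has two \emph{non-adjacent-reciprocally} vertices of reciprocal-degree $k-1$ lying outside any prescribed $(k-1)$-clique. This is the analogue of the undirected fact that a $k$-tree which is not a clique has two non-adjacent simplicial vertices.

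Concretely, if $|U_H\cap V(K)|\le 1$ and $|U_H|\ge 2$, then $U_H\setminus V(K)$ is nonempty, so $|U|\ge 2$ and the induction closes. Since $|U_H\cap V(K)|\le 1$ always holds, $|U_H\setminus V(K)|\ge |U_H|-1\ge 1$, and so $|U|\ge 2$ in every case. Thus the obstacle dissolves once we observe that a clique meets a pairwise non-reciprocal set in at most one vertex. The strengthened hypothesis is not actually needed; the key observation is simply that $U_H$ is pairwise non-reciprocal.
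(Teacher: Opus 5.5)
Your proof is correct and is essentially the paper's argument: the same base case $U=\{v_k,v_{k+1}\}$, and in the inductive step the same set $U=(U_H\setminus V(K))\cup\{v_n\}$, justified by the fact that the pairwise non-reciprocal set $U_H$ meets the clique $K$ in at most one vertex. You also spell out two things the paper leaves implicit: reciprocal-degrees outside $V(K)$ are unchanged, and $v_n$ is a source or sink of $D[U]$. You should delete the ``strengthened hypothesis'' digression, which is unnecessary and also false as stated: for $n=k+1$, a prescribed $(k-1)$-clique containing $v_k$ leaves only $v_{k+1}$ outside it among the vertices of reciprocal-degree $k-1$.
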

	
	\noindent{\bf Proof.} We proceed by induction on $n$. If $n=k+1$, then $D$ is obtained from $K_{k}^*$ by adding a new vertex $v_{k+1}$ such that $v_{k+1}$ has bidirectional arcs with each vertex in a $(k-1)$-clique $K$ and has unidirectional arcs to or from $V(K_{k}^*)\setminus V(K)$. Let $K \subseteq K_k^*$ be a $(k-1)$-clique in $D$, and $v_k=V(K_k^*)\setminus V(K)$. Clearly, $d_{D}^{\leftrightarrow}(v_k)=d_{D}^{\leftrightarrow}(v_{k+1})=k-1$, and $D[\{v_k\}\cup \{v_{k+1}\}]\cong T_{2}$ is an acyclic tournament, i.e., $U=\{v_k,v_{k+1}\}$, the result holds trivially.
	
	Now assume $n \ge k+2$ and that the result holds for each value smaller than $n$. By Definition~\ref{d31}, $D$ is obtained from some $H\in\mathcal{T}_{k-1}(n-1)$ by adding a new vertex $v_n$, where $N_D^+(v_n) \cap N_D^-(v_n)$ forms a $(k-1)$-clique $K$ in $H$, which implies that $d_{D}^{\leftrightarrow}(v_n)= k-1$. By the induction hypothesis, $H$ contains a set $U'$ satisfying $(i)-(iii)$. Since $H[U']$ is an acyclic tournament and $K$ is a clique, $U'$ contains at most one vertex of $K$. If there exists a vertex $w\in U' \cap V(K)$, then $U=(U'\setminus \{w\})\cup \{v_n\}\subseteq V(D)$ satisfies conditions $(i)-(iii)$. Otherwise, $U=U'\cup \{v_{n}\}$ is a vertex subset satisfying the required properties. \hfill\qedsymbol
	
    Let $D$ be a directed $(k-1)$-tree and let $v$ be a vertex with $d_D^{\leftrightarrow}(v)=k-1$. Now we show that $D-v$ is also a directed $(k-1)$-tree.

	\begin{lem}\label{l34}
		Let $k\ge 2$ and $n\ge k$ be integers. Let $D \in \mathcal{T}_{k-1}(n)$ and $v \in V(D)$ satisfying $d_{D}^{\leftrightarrow}(v)= k-1$. Then $D-v \in \mathcal{T}_{k-1}(n-1)$.
	\end{lem}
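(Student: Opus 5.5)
We argue by induction on $n$, peeling off the last vertex $v_n$ of the recursive construction in Definition \ref{d31} (applied with $k-1$ in place of $k$). The idea is that a vertex of reciprocal-degree $k-1$ is either $v_n$ itself or was already such a vertex in the smaller tree, and it cannot lie in the clique to which $v_n$ is attached.

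\emph{Base case.} For $n=k$ we have $D\cong K_k^*$. Every vertex $v$ satisfies $d_D^{\leftrightarrow}(v)=k-1$, and $D-v\cong K_{k-1}^*=T_{k-1}(k-1)\in\mathcal{T}_{k-1}(k-1)$.

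\emph{Inductive step.} Let $n\ge k+1$. By Definition \ref{d31}, $D$ arises from some $H\in\mathcal{T}_{k-1}(n-1)$ by adding $v_n$. The reciprocal neighbourhood of $v_n$ is a $(k-1)$-clique $K$ of $H$, and all arcs between $v_n$ and $V(H)\setminus V(K)$ are oriented the same way, say out of $v_n$. If $v=v_n$, then $D-v=H$ and we are done. So assume $v\in V(H)$.

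We first show $d_H^{\leftrightarrow}(v)=k-1$ and $v\notin V(K)$.
\begin{itemize}
\item Since $D[V(H)]=H$, we have $d_D^{\leftrightarrow}(v)\ge d_H^{\leftrightarrow}(v)\ge \delta^{\leftrightarrow}(H)=k-1$, using Lemma \ref{l32} for $H$ (valid since $n-1\ge k$).
\item Hence $d_H^{\leftrightarrow}(v)=k-1$, and $v_n$ contributes nothing to the reciprocal degree of $v$.
\item If $v\in V(K)$, then $v_n\in N_D^+(v)\cap N_D^-(v)$, which would give $d_D^{\leftrightarrow}(v)\ge k$. Therefore $v\notin V(K)$.
\end{itemize}

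Now the induction hypothesis applied to $H$ and $v$ gives $H-v\in\mathcal{T}_{k-1}(n-2)$. We need $n-1\ge k$ here, which holds. Since $v\notin V(K)$, the set $K$ is still a $(k-1)$-clique of $H-v$. The digraph $D-v$ is obtained from $H-v$ by adding $v_n$ with bidirectional arcs to $V(K)$ and unidirectional arcs, all in the same direction as in $D$, to $V(H-v)\setminus V(K)$. This is exactly one step of Definition \ref{d31}(ii), so $D-v\in\mathcal{T}_{k-1}(n-1)$.

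The main point requiring care is the claim $v\notin V(K)$: without it, deleting $v$ would destroy the clique to which $v_n$ is attached. This is handled by comparing $d_D^{\leftrightarrow}(v)$ with the lower bound $\delta^{\leftrightarrow}(H)=k-1$ from Lemma \ref{l32}.

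We should also check that the recursion is legitimate, namely that Definition \ref{d31}(ii) allows attaching the new vertex to any $(k-1)$-clique of the current tree. This is how the definition reads, since $K$ is an arbitrary complete subdigraph of the required order. Finally, the degenerate situation $n-1=k-1$ never occurs in the inductive step, because there $n\ge k+1$ and so $H$ has at least $k$ vertices.
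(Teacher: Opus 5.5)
Your proof is correct and follows essentially the same route as the paper: peel off $v_n$, use $\delta^{\leftrightarrow}(H)=k-1$ from Lemma~\ref{l32} to force $v\notin V(K)$, apply the induction hypothesis to $H-v$, and reattach $v_n$ to the surviving clique $K$. The only difference is that the paper treats $n=k+1$ as a separate base case, whereas your uniform inductive step already covers it, since there $V(H-v)\setminus V(K)=\emptyset$.
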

	\noindent{\bf Proof.} We prove by induction on $n$. If $n=k$, then $D\cong K_k^*$. For any vertex $v\in V(D)$, we have $d_D^{\leftrightarrow}(v)=k-1$, and $D-v\cong K_{k-1}^*\in \mathcal{T}_{k-1}(k-1)$. Thus the result holds for $n=k$.
	
	Now consider $n=k+1$. By Definition \ref{d31}, $D$ is obtained from $K_{k}^*$ by adding a new vertex $v_{k+1}$ with bidirectional arcs to a $(k-1)$-clique $K$ and unidirectional arcs to or from the remaining vertex $v_{k}$, where $\{v_k\}=V(K_k^*)\setminus V(K)$. Then $d_{D}^{\leftrightarrow}(v_k)=d_{D}^{\leftrightarrow}(v_{k+1})=k-1$. Moreover, $D-v_k\cong D-v_{k+1}\cong K_{k}^* \in \mathcal{T}_{k-1}(k)$. Thus, the result holds for $n=k+1$.
	
	Assume that $n \geq k+2$ and the lemma is true for all integers $m$ satisfying $k \leq m <n$. By Definition \ref{d31}, there exists $v_n \in V(D)$ such that $H = D - v_n \in \mathcal{T}_{k-1}(n-1)$, $N_D^+(v_n) \cap N_D^-(v_n)$ is a $(k-1)$-clique $K$ in $H$, and $v_n$ has unidirectional arcs to or from the vertices in $V(H) \setminus V(K)$. Note that $d_D^{\leftrightarrow}(v_n)=|V(K)|=k-1$.
	
	If $v = v_n$, then $D-v=D - v_n = H \in \mathcal{T}_{k-1}(n-1)$, and we are done. Now suppose that $v\neq v_n$. We claim that $v\notin V(K)$. Suppose, to the contrary, that $v\in V(K)$, then $d_{H}^{\leftrightarrow}(v)=d_{D}^{\leftrightarrow}(v)-1=k-2$, which contradicts the fact that $\delta^{\leftrightarrow}(H)=k-1$ (by Lemma \ref{l32}). Thus, $v\notin V(K)$, and so $d_{H}^{\leftrightarrow}(v)= k-1$. By the inductive hypothesis, $H-v\in \mathcal{T}_{k-1}(n-2)$.
	
	Since $v \notin V(K)$, it follows that $K$ is a $(k-1)$-clique in $H-v$. Moreover, $v_n$ still has bidirectional arcs with all vertices in $V(K)$, and $v_n$ has unidirectional arcs with vertices in $(V(H) \setminus V(K)) \setminus \{v\}=V(H-v)\setminus V(K)$. Thus, $D - v$ is constructed by adding $v_n$ to $H-v\in \mathcal{T}_{k-1}(n-2)$ in the manner required by Definition \ref{d31}, so $D - v \in \mathcal{T}_{k-1}(n-1)$.  \hfill\qedsymbol
	
    Let $D$ be a non-complete directed $(k-1)$-tree. Let $S$ be a minimum separator of $D$, and let $(B_1,B_2,\ldots,B_t)$ be an acyclic ordering of the strong components of $D-S$. Next, we prove that each component $B_i$ contains at least one vertex with reciprocal-degree $k-1$ and that each $S$-lobe of $D$ is also a directed $(k-1)$-tree.
	
    \begin{lem}\label{l35}
		Let $n$ and $k$ be integers with $k \geq 2$ and $n \geq k+1$, and let $D \in \mathcal{T}_{k-1}(n)$. Let $S$ be a minimum separator of $D$, and let $(B_1,B_2, \ldots, B_t)$ be an acyclic ordering of the strong components of $D - S$. Then each component $B_i$ contains at least one vertex with reciprocal-degree $k-1$ in $D$.
	\end{lem}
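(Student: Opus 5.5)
We argue by induction on $n$, following the recursive construction in Definition~\ref{d31}, and using Lemma~\ref{l32} (minimum separators are $(k-1)$-cliques) and Lemma~\ref{l34} (deleting a vertex of reciprocal-degree $k-1$ stays in $\mathcal{T}_{k-1}$).

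\textbf{Base case.} For $n=k+1$, $D$ is $K_k^*$ plus a vertex $v_{k+1}$. The vertex $v_{k+1}$ is joined bidirectionally to a $(k-1)$-clique $K$ and unidirectionally to the remaining vertex $v_k$. By Lemma~\ref{l32}, $S$ is a $(k-1)$-clique, so $D-S$ has just two vertices. If $S=K$, these are $v_k$ and $v_{k+1}$, each of reciprocal-degree $k-1$, and each forms its own component. Any other $(k-1)$-subset meets $\{v_k,v_{k+1}\}$, and then $D-S$ contains a digon, so it is not a separator. This settles the base case.

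\textbf{Inductive step.} For $n\ge k+2$, write $D=H+v_n$ with $H\in\mathcal{T}_{k-1}(n-1)$, where $v_n$ is bidirectionally joined to a $(k-1)$-clique $K$. By the proof of Lemma~\ref{l32}, $v_n\notin S$, so $S\subseteq V(H)$.

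\emph{Case $S=K$.} Then $D-S$ is $H-K$ with $v_n$ attached by arcs in one direction only, say out-arcs. No cycle passes through $v_n$, so $\{v_n\}$ is a component of $D-S$, and it has reciprocal-degree $k-1$. Every other component $B_i$ is a strong component of $H-K$. If $H-K$ is not strong, then $K$ is a minimum separator of $H$ and induction applies to $H$. A vertex of reciprocal-degree $k-1$ in $H$ that lies in $B_i$ keeps that degree in $D$, because it is not adjacent to $v_n$ bidirectionally (it is not in $K$). If $H-K$ is strong, it is a single component $B$. In that case I use Lemma~\ref{l33}: it gives an acyclic tournament $U$ of vertices of reciprocal-degree $k-1$ in $H$, and at most one of them lies in the clique $K$. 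So some vertex of $U$ lies in $B$, and its reciprocal-degree in $D$ is still $k-1$.

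\emph{Case $S\ne K$.} As shown in Lemma~\ref{l32}, $S$ is then a minimum separator of $H$. Since $K\setminus S\neq\emptyset$, $v_n$ is bidirectionally adjacent to some $w\in K\setminus S$, so $v_n$ joins the component of $D-S$ containing $w$. Each component of $D-S$ therefore either equals a component of $H-S$, or is the component of $H-S$ containing $w$ with $v_n$ added. By induction, each component of $H-S$ contains a vertex $x$ with $d_H^{\leftrightarrow}(x)=k-1$. If $x\notin K$, then $d_D^{\leftrightarrow}(x)=k-1$. If $x\in K$, then $d_D^{\leftrightarrow}(x)=k$, and this needs repair, as follows.

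\textbf{Main obstacle.} The difficulty is the last sub-case: the witness supplied by induction lies in $K$, so its reciprocal-degree rises when $v_n$ is added. Suppose $x\in K\cap B$ for a component $B$ of $H-S$. Since $v_n$ is bidirectionally adjacent to $x$, we have $v_n\in B$ in $D$ as well, and $v_n$ itself has reciprocal-degree $k-1$. So $v_n$ serves as the new witness for that component.

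One further check is needed. The inductive hypothesis must apply to $H$ whenever $H$ is non-complete, and $H-S$ is indeed non-strong here, so this holds. Hence every component of $D-S$ contains a vertex of reciprocal-degree $k-1$ in $D$.
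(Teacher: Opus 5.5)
Your proof follows the paper's route. You induct on $n$ by deleting the last vertex $v_n$, show $v_n\notin S$, and let $v_n$ witness its own component. The other components come from induction on $H$ when $S$ also separates $H$, and from Lemma~\ref{l33} when $S=V(K)$ and $H-K$ is strong. The paper splits on whether $H-S$ is strong rather than on whether $S=V(K)$, but the cases coincide after regrouping. Your base case and your case $S=V(K)$ are correct.

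One sentence in your case $S\neq V(K)$ is false, though the conclusion survives. It is not true that each component of $D-S$ either equals a component of $H-S$ or is the component of $w$ with $v_n$ added. The vertex $v_n$ dominates (or is dominated by) all of $V(H)\setminus V(K)$. So every component of $H-S$ that can reach the component of $w$ is absorbed into the component of $v_n$.

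Here is a concrete counterexample with $k=2$. Build the directed $1$-tree as follows:
\begin{itemize}
\item start with $a$;
\item add $b$ with $[b,a]$;
\item add $c$ with $[c,a]$ and $(c,b)$;
\item add $d$ with $[d,a]$ and $(d,b),(d,c)$;
\item finally add $v_5=e$ with $[e,c]$ and $(e,a),(e,b),(e,d)$.
\end{itemize}
Take $S=\{a\}\neq K=\{c\}$. Then $H-S$ is the transitive tournament on $\{b,c,d\}$, with components $\{b\},\{c\},\{d\}$. But $D-S$ has components $\{b\}$ and $\{c,d,e\}$, via the cycle $e\to d\to c\to e$.

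You never actually need that description. Let $B$ be a strong component of $D-S$ with $v_n\notin B$. Then $B$ is strongly connected in $H-S$. Any strong component of $H-S$ meeting $B$ is strongly connected in $D-S$, so by maximality $B$ is exactly a strong component of $H-S$. Moreover $B\cap V(K)=\emptyset$, because $V(K)\setminus S$ lies in the component of $v_n$. So the inductive witness in $B$ lies outside $K$ and keeps reciprocal-degree $k-1$ in $D$. The component containing $v_n$ is witnessed by $v_n$ itself.

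Argued this way, which is how the paper does it (it first notes $B_i\cap V(K)=\emptyset$ for every $B_i$ not containing $v_n$), your ``main obstacle'' of a witness lying in $K$ never arises.
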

	
	\noindent{\bf Proof.} We prove by induction on $n$. For $n=k+1$, $D$ is obtained from $K_k^*$ by adding a new vertex $v_{k+1}$. Let $K$ be a $(k-1)$-clique in $K_k^*$ and let $v_k$ be the unique vertex of $K_k^*$ not in $K$. Then $D$ has bidirectional arcs between $v_{k+1}$ and $V(K)$, and a unidirectional arc between $v_{k+1}$ and $v_k$. Hence $V(K)$ is the unique minimum separator of $D$, and the strong components of $D-V(K)$ are $\{v_k\}$ and $\{v_{k+1}\}$, each having reciprocal-degree $k-1$ in $D$.
	
	Now assume that $n\geq k+2$ and the result holds for all directed $(k-1)$-trees with fewer vertices. Let $S$ be a minimum separator of $D$. Then $|S|=k-1$. Let $v_n$ be the last vertex added in the construction of $D$ and let $H=D-v_n$. Without loss of generality, assume that there exists a $(k-1)$-clique $K$ in $H$ such that $N_{D}^+(v_n)\cap N_{D}^-(v_n)=V(K)$ and $N_{D}^+(v_n)=V(H)$. We claim that $v_n\notin S$. If $v_n\in S$, then $S'=S\setminus \{v_n\}$ is a separator of $H$ as $D-S=D-v_n-S'=H-S'$ is not strongly connected, contradicting the fact that $\kappa(H)=k-1$ (by Lemma \ref{l32}). Therefore, $v_n\notin S$, and so $v_n$ lies in a component of $D-S$, denoted by $B_l$. Since $d_D^{\leftrightarrow}(v_n)=k-1$, the component $B_l$ contains a vertex $v_n$ with reciprocal-degree $k-1$ in $D$.
	
	Now we consider any other component $B_i \neq B_l$ of $D-S$. By the definition of $D$, $\{v_n\}$ and $V(K)\setminus S$ belong to the same strong component of $D-S$, so $B_i\cap V(K)=\emptyset$.
	
	If $H-S$ is not strongly connected, then as $|S|=k-1$ and $\kappa(H)=k-1$, $S$ is also a minimum separator of $H$. Moreover, $B_i$ is a strong component of $H-S$. By the induction hypothesis, $B_i$ contains a vertex $w$ with $d_H^{\leftrightarrow}(w)=k-1$. Since $B_i\cap V(K)=\emptyset$, we have $w\notin V(K)$. Thus $d_D^{\leftrightarrow}(w)=d_H^{\leftrightarrow}(w)=k-1$ and the result holds.
	
	If $H-S$ is strongly connected, then $S=V(K)$ and $B_i=H-S$. By Lemma \ref{l34}, $H$ is a directed $(k-1)$-tree. Lemma \ref{l33} implies that $H$ contains at least two vertices of reciprocal-degree $k-1$, and these vertices form an acyclic tournament. Since $S$ is a clique, we have $B_i = H-S$ contains at least one vertex $w'$ with $d_H^{\leftrightarrow}(w')=k-1$. Since $B_i\cap V(K)=\emptyset$, we have $w'\notin V(K)$, and so $d_D^{\leftrightarrow}(w')=d_H^{\leftrightarrow}(w')=k-1$. Hence, $B_i$ contains a vertex $w'$ with reciprocal-degree $k-1$, completing the proof.  \hfill\qedsymbol
	
	\begin{lem} \label{l36}
		Let $k\ge 2$ and $n\ge k+1$ be integers, and let $D\in\mathcal{T}_{k-1}(n)$ be a directed $(k-1)$-tree. Let $S$ be a minimum separator of $D$, and let $(B_1,B_2,\ldots,B_t)$ be an acyclic ordering of the strong components of $D-S$. For any nonempty subset $J\subseteq\{1,2,\ldots,t\}$, we have $D_J=D[S\cup (\bigcup_{j\in J} V(B_j))]\in \mathcal{T}_{k-1}(n_J)$, where $n_J=|V(D_J)|$. In particular, if $|J|=1$, then we obtain $D_i=D[S\cup V(B_i)]\in \mathcal{T}_{k-1}(n_i)$, where $i\in J\subseteq \{1,2,\ldots, t\}$ and $n_i=|V(D_i)|$.
	\end{lem}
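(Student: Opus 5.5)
Since $D_J$ is obtained from $D$ by deleting the vertices of the components $B_i$ with $i\notin J$, I will peel those vertices off one at a time, each time deleting a vertex of reciprocal-degree $k-1$, and use Lemma \ref{l34} to stay inside the family of directed $(k-1)$-trees. The argument is an induction on $m=\sum_{i\notin J}|V(B_i)|$. If $m=0$, then $D_J=D$ and there is nothing to prove. If $m>0$, I pick an index $i\notin J$ and look for a vertex $w\in V(B_i)$ with $d_D^{\leftrightarrow}(w)=k-1$. Lemma \ref{l35} provides such a vertex. Then Lemma \ref{l34} gives $D-w\in\mathcal{T}_{k-1}(n-1)$.

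To keep the induction running, I need to show that in $D-w$ the set $S$ is still a minimum separator and that its strong components are the $B_j$ ($j\neq i$) together with the strong components of $B_i-w$. I also need that the total number of vertices outside $J$ drops by one. First, $D-w-S$ is $D-S$ with one vertex removed. The $B_j$ with $j\in J$ survive as strong components, since deleting a vertex from another component does not change them; arcs out of other components were already in place.

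So $D-w-S$ has at least two strong components, or it is trivial. Hence $S$ is a separator of $D-w$, unless $D-w-S$ has a single vertex, and in that degenerate case $S$ is still a separator by the convention "or is trivial". By Lemma \ref{l32}, $\kappa(D-w)=k-1=|S|$ whenever $n-1\ge k+1$, so $S$ is a minimum separator of $D-w$.

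If $n-1=k$, then $D-w\cong K_k^*$, and this edge case needs a small separate remark: then $J$ consists of one component with one vertex, $B_i=\{w\}$, and $D_J=D-w$ directly. The components of $(D-w)-S$ contain all the $B_j$ with $j\in J$, and the vertices outside $J$ number $m-1$. The induction hypothesis applied to $D-w$, with $J$ re-indexed to the surviving components, then gives $(D-w)[S\cup\bigcup_{j\in J}V(B_j)]=D_J\in\mathcal{T}_{k-1}(n_J)$.

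The main obstacle is bookkeeping rather than a deep step. After $w$ is removed, $B_i-w$ may split into several strong components, and the acyclic ordering must be re-established. This is harmless because the induction hypothesis is stated for an arbitrary minimum separator and arbitrary $J$. The other point to check is that Lemma \ref{l35} requires $n\ge k+1$ at every stage. This holds as long as some component outside $J$ remains, since then $D$ has at least $|S|+2=k+1$ vertices. The special case $|J|=1$ follows immediately.
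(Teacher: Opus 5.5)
Your peeling argument works and is a genuinely different route from the paper's, but one step is misstated. The claim that $D-w-S$ has at least two strong components or is trivial fails in one case. Suppose $w$ is the last vertex outside $J$ and $J=\{j\}$ with $|V(B_j)|\ge 2$. Then $D-w-S=B_j$ is a single nontrivial strong component, so $S$ is not a separator of $D-w$. The induction hypothesis presupposes a minimum separator, so it cannot be applied there. Your remark about $n-1=k$ covers only the subcase $|V(B_j)|=1$.

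The repair is to treat $m=1$ as the terminal step: there $D-w=D_J$, and Lemma \ref{l34} finishes. For $m\ge 2$ your claim is correct. In that case $D-w-S$ still contains a vertex of $\bigcup_{j\in J}V(B_j)$ and a vertex outside this union. These lie in different strong components, because deleting $w$ only refines the strong components of $D-S$. Lemma \ref{l32} then gives $\kappa(D-w)=k-1$, so $S$ is a minimum separator of $D-w$.

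The paper instead inducts on $n$ along the construction order. It removes the last-added vertex $v_n$, shows $v_n\notin S$, and splits according to whether $H-S$ is strongly connected, where $H=D-v_n$.
\begin{itemize}
\item If $H-S$ is strongly connected, then $S=V(K)$ and $D-S$ has exactly the two components $\{v_n\}$ and $H-S$.
\item If not, $S$ is a minimum separator of $H$, induction is applied to $H$, and $v_n$ is re-attached to $D_J-v_n$. This needs the observation that $V(K)\setminus S$ lies in the component of $v_n$, so $V(K)\subseteq V(D_J)$.
\end{itemize}

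Your approach replaces this case split and re-attachment with Lemmas \ref{l34} and \ref{l35}, and handles every $J$ uniformly. It also shows that $D_J$ is obtained from $D$ by successively deleting vertices of reciprocal-degree $k-1$. In exchange, the paper's argument uses only Lemma \ref{l32} and Definition \ref{d31}. It therefore does not depend on Lemma \ref{l35}, whose proof in turn relies on Lemmas \ref{l33} and \ref{l34}.
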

	
	\noindent{\bf Proof.} We proceed by induction on $n$. For $n=k+1$, by Definition \ref{d31}, $D$ is constructed by adding a new vertex $v_{k+1}$ to $K_k^*$, where $v_{k+1}$ has bidirectional arcs with a $(k-1)$-clique $K \subseteq K_k^*$ and unidirectional arcs to or from the unique vertex $v_k\in V(K_k^*)\setminus V(K)$. Clearly, $V(K)$ is the unique minimum separator of $D$ and $|V(K)|=k-1$. The strong components of $D\setminus V(K)$ are $\{v_k\}$ and $\{v_{k+1}\}$. Moreover, $D[V(K)\cup \{v_k\}]\cong D[V(K)\cup \{v_{k+1}\}]\cong K_k^*\in \mathcal{T}_{k-1}(k)$, so the result holds for $n=k+1$.
	
	Now assume that $n\geq k+2$ and the result holds for all directed $(k-1)$-trees with fewer vertices. Let $D\in \mathcal{T}_{k-1}(n)$. By Definition \ref{d31}, let $v_n$ be the last vertex added during the construction of $D$, and let $H=D\setminus v_n$. Let $K$ be the $(k-1)$-clique in $H$. Without loss of generality, assume that $N_{D}^+(v_n) \cap N_{D}^-(v_n)=V(K)$ and $N_{D}^+(v_n)=V(H)$.
	
	Let $S$ be a minimum separator of $D$. Then by Lemma \ref{l32}, we have $|S|=k-1$. We claim that $v_n\notin S$. Suppose $v_n\in S$. Then $S'=S\setminus\{v_n\}$ is a separator of $H$ because $D-S = H-S'$ is not strongly connected. This contradicts $\kappa(H)=k-1$ (by Lemma \ref{l32}). Hence $v_n\notin S$, and we have $S\subseteq V(H)$. If $H-S$ is strongly connected, then $S=V(K)$ and $D-S$ exactly contains two strong components $B_1=\{v_n\}$ and $B_2=H-S$. Therefore, $D_1=D[S\cup \{v_n\}]=K_{k}^*\in \mathcal{T}_{k-1}(k)$, and $D_2=D-v_n\in \mathcal{T}_{k-1}(n-1)$.
	
	If $H-S$ is not strongly connected, then as $|S|=k-1$ and $\kappa(H)=k-1$, $S$ is also a minimum separator of $H$ and $S$ is a separation clique of $H$. If $v_n\notin V(D_J)$, then $D_J\subseteq H$. By the inductive hypothesis, $D_J\in \mathcal{T}_{k-1}(n_J)$. If $v_n\in V(D_J)$, then $D_J-v_n\subseteq H$, and by the inductive hypothesis, $D_J-v_n\in \mathcal{T}_{k-1}(n_J-1)$. By the construction of $D$, $V(K)\setminus S$ and $\{v_n\}$ belong to the same strong component of $D-S$. Therefore, $V(K)\subseteq D_J-v_n$. Recall that $N_{D}^+(v_n) \cap N_{D}^-(v_n)=V(K)$ and $N_{D}^+(v_n)=V(H)$, so adding $v_n$ to $D_J-v_n$ satisfies the construction of directed $(k-1)$-tree. Thus, $D_J\in \mathcal{T}_{k-1}(n_J)$.
	
	For the special case when $|J|=1$, $D_i\in \mathcal{T}_{k-1}(n_i)$ holds trivially, where $i\in J\subseteq \{1,2,\ldots, t\}$, $D_i=D[S\cup V(B_i)]$ and $n_i=|V(D_i)|$. This completes the proof. \hfill\qedsymbol

	With these properties established, we now prove that every directed $(k-1)$-tree is $\mathcal{D}_k$-saturated.
	
	\begin{thm}\label{t37}
		Let $n$ and $k$ be integers with $k \geq 2$ and $n \geq k$. If $D\in \mathcal{T}_{k-1}(n)$, then $D$ is $\mathcal{D}_k$-saturated, and $|A(D)|=\binom{n-k+1}{2}+(k-1)(2n-k)$.
	\end{thm}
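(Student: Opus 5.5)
The proof will be by induction on $n$, treating the arc count and the saturation property separately.

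\emph{Arc count.} For $n=k$ we have $D\cong K_k^*$ with $k(k-1)$ arcs, and the formula gives $\binom{1}{2}+(k-1)k=k(k-1)$. In the step from $n-1$ to $n$, the new vertex $v_n$ has bidirectional arcs to the $k-1$ vertices of $K$ and one arc to or from each of the remaining $n-k$ vertices, so it contributes $2(k-1)+(n-k)$ arcs. The difference of the formula values is
\[\binom{n-k+1}{2}-\binom{n-k}{2}+2(k-1)=(n-k)+2(k-1),\]
which matches, so the count follows.

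\emph{No $k$-strongly connected subdigraph.} I will again argue by induction, with $D=H+v_n$ where $H\in\mathcal{T}_{k-1}(n-1)$. Suppose $D'\subseteq D$ has $\kappa(D')\ge k$. Then $D'\not\subseteq H$ by induction, so $v_n\in V(D')$. Since $D'$ is strong with at least $k+1$ vertices, $v_n$ needs both an in-neighbour and an out-neighbour in $D'$ outside any $(k-1)$-set. However, $v_n$ has only out-arcs (say) to $V(H)\setminus V(K)$, so all its in-neighbours lie in $K$. Hence $V(K)\cap V(D')$, of size at most $k-1$, separates $v_n$ from the rest of $D'$: after deleting it, $v_n$ has in-degree $0$. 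The rest is nonempty because $|V(D')|\ge k+1$, giving $\kappa(D')\le k-1$, a contradiction.

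\emph{Saturation.} This is the main step. Let $e=(u,w)\notin A(D)$ and use induction on $n$, the case $n=k$ being vacuous. Take a minimum separator $S$ of $D$; by Lemma~\ref{l32} it is a $(k-1)$-clique. If $u,w$ lie in a common lobe $D_J$ (with $|J|\le 2$ containing their components), then Lemma~\ref{l36} says $D_J$ is a smaller directed $(k-1)$-tree, and induction gives a $k$-strong subdigraph of $D_J+e\subseteq D+e$. Some care is needed when $D-S$ has only one component containing both endpoints; then I would instead choose $S$ so that the endpoints are split, or apply induction to $H=D-v_n$ when $v_n\notin\{u,w\}$. The remaining case is $u\in B_i$, $w\in B_j$ with $i\ne j$. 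By Lemma~\ref{l23} all arcs from $B_i$ to $B_j$ exist in the forward direction, so $e$ goes backward, from $B_j$ to $B_i$.

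For this case I would take $D_J$ with $J=\{i,j\}$, a directed $(k-1)$-tree, and show directly that $D_J+e$ is $k$-strongly connected. The aim is to show that every $(k-1)$-set $T$ fails to separate. If $T\ne S$, then by Lemma~\ref{l32} the set $T$ is either not a separator of $D_J$, or it is another clique separator. In the latter case, the added arc together with the complete forward arcs between $B_i\setminus T$ and $B_j\setminus T$ should restore strong connectivity. If $T=S$, the forward arcs from $B_i$ to $B_j$ plus the back arc $e$ make $D_J-S+e$ strong. This is the main obstacle: handling separators $T\ne S$ cleanly. If it becomes messy, I would fall back on induction via Lemma~\ref{l34}, deleting a vertex of reciprocal degree $k-1$ not incident to $e$. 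Such a vertex exists in each $B$ by Lemma~\ref{l35}, or else among the at least two vertices given by Lemma~\ref{l33}. The argument would then reduce to a smaller tree in which $e$ is still a non-arc, and $n=k+1$, where $D+e\cong K_{k+1}^*$ minus nothing essential, serves as the base case.
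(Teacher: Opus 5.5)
Your arc count and your proof that $D$ has no $k$-strongly connected subdigraph are correct and agree with the paper. Your observation that every in-neighbour of $v_n$ lies in $K$ is exactly what justifies the paper's step of deleting the reciprocal neighbourhood. The gap is in the saturation step.

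When $D-S$ has at least three strong components, Lemma~\ref{l36} and induction on the smaller tree $D_{\{i,j\}}$ already finish. So the hard case is $t=2$ with $u,w$ in different components, where $D_J=D$. There your main plan, proving that $D_J+e$ is $k$-strongly connected, is false in general. Take $k=2$: start with $v_1$, add $v_2$ with $[v_2,v_1]$, then $v_3$ with $[v_3,v_1]$ and $(v_3,v_2)$, then $v_4$ with $[v_4,v_3]$ and $(v_4,v_1),(v_4,v_2)$. With $S=\{v_1\}$, the strong components of $D-S$ are $\{v_3,v_4\}$ and $\{v_2\}$, and $e=(v_2,v_3)$ is a backward non-arc. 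Yet in $(D+e)-v_3$ the vertex $v_4$ has in-degree $0$. A clique separator $T\neq S$ can cut off a part of $B_i$ that $e$ does not touch, so the forward arcs plus $e$ need not restore strong connectivity. Also, Lemma~\ref{l23} cannot be cited here, since it presupposes saturation. The forward arcs you want follow instead from the fact that a directed $(k-1)$-tree is semicomplete by construction.

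Your fallback is the paper's reduction: delete a vertex of reciprocal degree $k-1$ not incident with $e$, then use Lemma~\ref{l34} and induction. But such a vertex need not exist. Lemma~\ref{l33} guarantees only two of them, and they can be exactly $u$ and $w$. For example, take $k=2$ and attach each $v_m$ by a digon to $v_{m-1}$. Then only $v_1$ and $v_n$ have reciprocal degree $1$, and $e$ can join them, for every $n\ge 3$. So the induction does not bottom out at $n=k+1$, and this residual case needs its own argument, which your proposal lacks.

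The missing idea, which is the paper's, is that in this case $D+e$ is itself $k$-strongly connected:
\begin{itemize}
\item Sets of fewer than $k-1$ vertices do not separate $D$, and a $(k-1)$-set $T$ separating $D$ is a clique, both by Lemma~\ref{l32}.
\item By Lemma~\ref{l35}, every strong component of $D-T$ contains a vertex of reciprocal degree $k-1$.
\item Since only $u$ and $w$ qualify, $D-T$ has exactly two strong components, one containing $u$ and the other $w$.
\item The reverse of $e$ is an arc of $D$ and goes forward between these components, so $e$ is a backward arc and $(D+e)-T$ is strong.
\end{itemize}
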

	\noindent{\bf Proof.} We proceed by induction on $n$. If $n=k$, then $D \cong K_{k}^*$ is $\mathcal{D}_k$-saturated and $|A(D)|=2\binom{k}{2}=\binom{k-k+1}{2}+(k-1)(2k-k)$.
	
	Now assume that $n \geq k+1$ and the theorem holds for all directed $(k-1)$-trees with fewer than $n$ vertices. Let $D$ be a directed $(k-1)$-tree on $n$ vertices. By Lemma \ref{l33}, we obtain that $D$ contains at least two vertices with reciprocal-degree $k-1$ and these vertices form an acyclic tournament. By Definition \ref{d31}, let $v_n$ be the last vertex added during the construction of $D$. Then $d_{D}^{\leftrightarrow}(v_n)=k-1$. By the inductive hypothesis, $D-v_n\in \mathcal{T}_{k-1}(n-1)$ is $\mathcal{D}_k$-saturated.
	
	Firstly, we prove that $D$ contains no $k$-strongly connected subdigraph. On the contrary, assume that there exists a $k$-strongly connected subdigraph $H \subseteq D$. If $H$ does not contain $v_n$, it would be a subdigraph of $D-v_n$, contradicting the $\mathcal{D}_k$-saturation of $D-v_n$. If $H$ contains $v_n$, then since $d_{H}^{\leftrightarrow}(v_n)\leq d_{D}^{\leftrightarrow}(v_n)=k-1$, removing $N_H^+(v_n) \cap N_H^-(v_n)$ would separate $v_n$ from $H$, contradicting that $H$ is $k$-strongly connected.
	
	Next, we will prove that $D+e$ contains a $k$-strongly connected subdigraph for any $e \in A(D^c)$. If $e$ is not incident to $v_n$, then as $D-v_n$ is $\mathcal{D}_k$-saturated, $(D-v_n)+e$ contains a $k$-strongly connected subdigraph, which is also a subdigraph of $D+e$. Now suppose $e$ is incident to $v_n$, say $e = (u,v_n)$ with $u \notin N_D^-(v_n)$. By Definition \ref{d31}, $(v_n,u) \in A(D)$.
	
	If there exists $w \notin \{u,v_n\}$ with $d_{D}^{\leftrightarrow}(w)=k-1$, then by Lemma \ref{l34}, $D-w \in \mathcal{T}_{k-1}(n-1)$ is $\mathcal{D}_k$-saturated and $e$ is still missing in $D-w$, so $(D-w)+e$ contains a $k$-strong subdigraph, which is also in $D+e$. Thus we may assume that $D$ has exactly two vertices $u$ and $v_n$ satisfying $d_{D}^{\leftrightarrow}(u)=d_{D}^{\leftrightarrow}(v_n)=k-1$.
	
	 Now we show that $D+(u,v_n)$ contains a $k$-strongly connected subdigraph. Let $S$ be any vertex set of size $k-1$ in $D + e$. If $S$ is not a $(k-1)$-clique, then by Lemma~\ref{l32}, $D - S$ is strongly connected, so $(D + e) - S$ is also strongly connected.
	
	If $S$ is a $(k-1)$-clique, then by Lemma~\ref{l35} and Lemma~\ref{l36}, each strong component of $D-S$ contains a vertex with reciprocal-degree $k-1$, and each $S$-lobe of $D$ is a directed $(k-1)$-tree. Consequently, there are only two $S$-lobes in $D$, one containing $u$ and the other containing $v_n$. Thus $S$ is not a separator of $D+(u,v_n)$, and we conclude that $D+(u,v_n)$ is $k$-strongly connected. Therefore, $D$ is $\mathcal{D}_k$-saturated.
	
	By Definition \ref{d31}, $D \in \mathcal{T}_{k-1}(n)$ is constructed by adding $v_n$ to $D'=D-v_n \in \mathcal{T}_{k-1}(n-1)$. By inductive hypothesis, $|A(D')|=\binom{n-k}{2}+(k-1)(2n-k-2)$. Thus, \begin{flalign*}
		|A(D)|&=|A(D')|+|[\{v_n\}, V(K)]|+|(\{v_n\},V(D')\setminus V(K))|\\
		&=\binom{n-k}{2}+(k-1)(2n-k-2)+2(k-1)+(n-k)\\
		&=\binom{n-k+1}{2}+(k-1)(2n-k).
	\end{flalign*}
	This completes the proof.
	\hfill\qedsymbol
	
	The following observation will be useful in the proof of the main result regarding the saturation number.
	
	\begin{Obs}\label{o38}
		Let $a$ and $b$ be two positive integers. If $a, b\geq 1$, then $\binom{a+b}{2} =\binom{a}{2}+\binom{b}{2}+ab$.
	\end{Obs}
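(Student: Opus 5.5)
The identity in Observation \ref{o38} is elementary, and I will prove it by direct expansion. A second, combinatorial argument serves as a check.

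\emph{Algebraic route.} Write
\[
\binom{a+b}{2}=\frac{(a+b)(a+b-1)}{2}=\frac{a^2+2ab+b^2-a-b}{2}.
\]
Split the numerator as $(a^2-a)+(b^2-b)+2ab$. Dividing each part by $2$ gives $\binom{a}{2}+\binom{b}{2}+ab$, which is the claimed identity.

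\emph{Combinatorial route.} Partition a set of $a+b$ elements into disjoint parts $A$ and $B$ with $|A|=a$ and $|B|=b$. Each $2$-subset falls into exactly one of three classes: it lies inside $A$, it lies inside $B$, or it meets both parts. These classes have sizes $\binom{a}{2}$, $\binom{b}{2}$ and $ab$ respectively. Summing the three counts gives $\binom{a+b}{2}$.

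Neither step is a genuine obstacle. The hypothesis $a,b\ge 1$ is not actually needed, since the identity holds for all nonnegative integers with $\binom{1}{2}=\binom{0}{2}=0$.

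In the later saturation argument I expect the observation to be applied with $a=n_1-k+1$ and $b=n_2-k+1$, where $n_1$ and $n_2$ are the orders of the two sides of a separation $(S,D_1,D_2)$. There $a+b=n-k+1$, and the identity absorbs the cross term $|(V(B_1),V(B_2))_D|\ge ab$, which comes from Lemma \ref{l23}, into the bound $\binom{n-k+1}{2}$.
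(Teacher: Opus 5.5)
Your proof is correct and is the natural one: the paper states this identity without proof as a routine fact, and your direct expansion of $(a+b)(a+b-1)/2$ (with the double count of $2$-subsets as a check) is exactly the intended verification. You are also right that $a,b\ge 1$ is unnecessary. Your anticipated use matches the paper, which applies the identity with $a=b_1$, $b=b_2$ and $b_1+b_2=n-k+1$ to absorb the cross term $b_1b_2$ between $B_1$ and $B_2$.
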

	
	We now state and prove the main theorem of this section, which determines the saturation number for $\mathcal{D}_k$.
	
	\begin{thm}\label{t39}
		Let $k\geq 1$ and $n\geq k$ be integers. Then $$\mathrm{sat}(n,\mathcal{D}_k)=\binom{n-k+1}{2}+(k-1)(2n-k).$$
	\end{thm}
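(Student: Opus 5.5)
The upper bound comes from Theorem \ref{t37}: every directed $(k-1)$-tree on $n$ vertices is $\mathcal{D}_k$-saturated and has exactly $\binom{n-k+1}{2}+(k-1)(2n-k)$ arcs, so $\mathrm{sat}(n,\mathcal{D}_k)\le \binom{n-k+1}{2}+(k-1)(2n-k)$ whenever $k\ge 2$. For $k=1$, Lemma \ref{l21} says every $\mathcal{D}_1$-saturated digraph is an acyclic tournament, hence has exactly $\binom{n}{2}$ arcs, which matches the formula with $k=1$. So it remains to prove the lower bound
\[
|A(D)|\ge \binom{n-k+1}{2}+(k-1)(2n-k)
\]
for every $\mathcal{D}_k$-saturated $D$ with $k\ge2$.

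I would argue by strong induction on $n$, writing $f(n)=\binom{n-k+1}{2}+(k-1)(2n-k)$.

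\emph{Base case and setup.} For $n=k$ the only saturated digraph is $K_k^*$ (by the convention, or since a missing arc could never create a $k$-strong subdigraph on $k$ vertices), with $k(k-1)=f(k)$ arcs. For $n\ge k+1$, Lemma \ref{l22} provides a separator $S$ with $|S|=k-1$, and I take the separation $(S,D_1,D_2)$ with $|V(D_i)|=n_i$ and $n_1+n_2=n+k-1$.

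\emph{Main steps.} I would count the arcs as
\[
|A(D)|=|A(D_1)|+|A(D_2)|-|A(D[S])|+|[V(B_1),V(B_2)]_D|.
\]
By Lemma \ref{l23}, applied to the acyclic ordering with $B_1$ first, all arcs from $B_1$ to the rest are present, so the last term is exactly $(n_1-k+1)(n_2-k+1)$. If $D[S]$ is complete, Lemma \ref{l24}(ii) makes $D_1$ and $D_2$ saturated, and induction gives $|A(D_i)|\ge f(n_i)$. Observation \ref{o38} with $a=n_1-k+1$ and $b=n_2-k+1$ then shows that
\[
f(n_1)+f(n_2)-(k-1)(k-2)+ab
\]
equals $f(n)$. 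Indeed, the linear terms give $(k-1)(2n_1+2n_2-2k)-(k-1)(k-2)=(k-1)(2n-k)$, using $n_1+n_2=n+k-1$.

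\emph{Main obstacle.} The difficulty is that $D[S]$ need not be complete, so $D_i$ need not be saturated. My plan is to first show that we may choose $S$ to be a clique. Suppose $(x,y)\notin A(D)$ with $x,y\in S$. Saturation gives a $k$-strong subdigraph $H$ of $D+(x,y)$. Lemma \ref{l24}(i) places $H$ inside $D_1+(x,y)$ or $D_2+(x,y)$, say $D_1+(x,y)$. Then in $D_2$ the pair can be handled by passing to $D_i^+=D_i+(A(D[S]^c))$, i.e. adding all missing arcs inside $S$. I would try to prove that each $D_i^+$ is saturated. The argument would be that a $k$-strong subdigraph of $D+e$ for $e$ in $D_i^c$ lies in $D_i+e$ by Lemma \ref{l24}(i); extra arcs in $S$ do not create a $k$-strong subgraph in $D_i^+$, since otherwise one could route through the other lobe (which is strongly connected to $S$ via $B_j$) to simulate the missing $S$-arcs in $D$. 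Then $|A(D_1^+)|+|A(D_2^+)|-2\binom{k-1}{2}$ overcounts $|A(D_1)|+|A(D_2)|-|A(D[S])|$ by exactly the number $m$ of missing arcs in $S$. This calls for a compensating argument, for instance showing that each missing $S$-arc forces extra arcs elsewhere, or choosing a different separator. I expect this step, verifying that the simulation through $B_j$ preserves $k$-strong connectivity and absorbing the deficit $m$, to be the hardest part. If it resists, I would try instead a minimum-counterexample argument that picks $S$ to maximize $|A(D[S])|$.
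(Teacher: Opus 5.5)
Your treatment of $k=1$, the upper bound, the base case, and the case where $D[S]$ is complete is correct and matches the paper. This includes getting $|[V(B_1),V(B_2)]_D|=b_1b_2$ from Lemma~\ref{l23} and the computation via Observation~\ref{o38}. The gap is the case where $D[S]$ is not complete, and both of your suggested repairs fail.

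First, you cannot in general choose a clique separator. For $k=3$, take $D$ on $\{a,b,c,d,e\}$ with arcs $(a,b)$, $[c,d]$, $[x,y]$ for $x\in\{a,b\}$ and $y\in\{c,d,e\}$, and $(c,e),(d,e)$; this is the member of $\mathcal{D}_U(5,3)$. It is $\mathcal{D}_3$-saturated, yet its only separator of size $2$ is $\{a,b\}$, which is not a clique. So maximizing $|A(D[S])|$ does not help either.

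Second, and more seriously, your claim that $D_i^+$ is saturated is false whenever $D[S]$ is not complete. For each missing arc $e$ inside $S$, saturation of $D$ together with Lemma~\ref{l24}(i) gives a $k$-strongly connected subdigraph of $D_1+e$ or of $D_2+e$. Hence at least one of $D_1^+,D_2^+$ contains a $k$-strongly connected subdigraph. In the example above, $D_1=D[\{a,b,c,d\}]$ and $D_1^+=K_4^*$. No ``routing through the other lobe'' can rescue this, and even if it could, you would still be left with the uncompensated deficit $m$.

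The missing idea is that this same observation removes the deficit, provided you add each missing $S$-arc to only one side. Process the missing arcs of $D[S]$ one at a time. Add $e$ to the current copy of $D_i$ only if the current $D_i+e$ still contains no $k$-strongly connected subdigraph. Since $D_1+e$ or $D_2+e$ already contains one, every missing arc is added to at most one side. So the total number of added arcs is at most $(k-1)(k-2)-|A(D[S])|$.

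The resulting digraphs $D_1',D_2'$ are $\mathcal{D}_k$-saturated:
they are free of $k$-strongly connected subdigraphs by construction;
a missing arc $e$ with an endpoint in $B_i$ has its witness inside $D_i+e$ by Lemma~\ref{l24}(i);
a missing $S$-arc that was not added already had a witness when it was examined.

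Therefore
\[
|A(D_1)|+|A(D_2)|-|A(D[S])|\ \ge\ |A(D_1')|+|A(D_2')|-(k-1)(k-2).
\]
Now apply your clique-case computation verbatim to $D_1',D_2'$, which have between $k$ and $n-1$ vertices, to get $|A(D)|\ge\binom{n-k+1}{2}+(k-1)(2n-k)$. This is exactly how the paper handles the non-clique case.
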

	
	\noindent{\bf Proof.} For $k=1$, the statement follows directly from Lemma~\ref{l21}, since each $\mathcal{D}_1$-saturated digraph $D$ is an acyclic tournament and has exactly $\binom{n}{2} = \binom{n-1+1}{2}+(1-1)(2n-1)$ arcs. Assume $k\geq 2$ in the following proof.
	
    The inequality $\mathrm{sat}(n,\mathcal{D}_k) \leq \binom{n-k+1}{2}+(k-1)(2n-k)$ is provided by Theorem~\ref{t37}, as directed $(k-1)$-tree achieves this bound and is $\mathcal{D}_k$-saturated.
	
	We will prove $\mathrm{sat}(n,\mathcal{D}_k) \geq \binom{n-k+1}{2}+(k-1)(2n-k)$ by induction on $n$. If $n=k$, the unique $n$-vertex $\mathcal{D}_k$-saturated digraph is $K_{k}^*$, and so the result holds.
	
	Let $D$ be a $\mathcal{D}_k$-saturated digraph of order $n \geq k+1$. We assume that the result holds for each value smaller than $n$. Since $n\geq k+1$, $D$ is not a complete digraph. Let $S$ be the minimum separator of $D$, $B_1$ be the source strong component of $D-S$, and $B_2=D-(S\cup V(B_1))$. By Lemma \ref{l22}, $|S|=k-1$. Let $D_1=D[S\cup V(B_1)]$, $D_2=D[S\cup V(B_2)]$, $|V(B_1)|=b_1$ and $|V(B_2)|=b_2$. Then $b_1+b_2=n-k+1$.
	
	If $D[S]\cong K_{k-1}^*$ is a clique, then by Lemma \ref{l24}, both $D_1$ and $D_2$ are $\mathcal{D}_k$-saturated. It follows by induction that both $|A(D_1)|\geq \binom{b_1}{2}+(k-1)(2(b_1+k-1)-k)$ and $|A(D_2)|\geq \binom{b_2}{2}+(k-1)(2(b_2+k-1)-k)$. Thus,
	\begin{flalign*}
		|A(D)|=&|A(D_1)|+|A(D_2)|-|A(D[S])|+|[V(B_1),V(B_2)]_D|\\
		\geq& \binom{b_1}{2}+(k-1)(2(b_1+k-1)-k)+\binom{b_2}{2}+(k-1)(2(b_2+k-1)-k)-2\binom{k-1}{2}+b_1 b_2\\
		=& \binom{b_1}{2}+\binom{b_2}{2}+b_1 b_2+(k-1)(2(b_1+b_2+2k-2)-2k-k+2)\\
		=&\binom{n-k+1}{2}+(k-1)(2n-k).&
	\end{flalign*}
	
	Now we assume that $D[S]$ is not a $(k-1)$-clique and $|A(D[S])|=m$. Since $D$ is $\mathcal{D}_k$-saturated, it follows that for any arc $e\in A((D[S])^c)$, there exists a $k$-strongly connected subdigraph $D'\subseteq D+e$. By Lemma \ref{l24}, $D'$ is either a subdigraph of $D_1+e$ or a subdigraph of $D_2+e$. If $D_1+e$ contains no $k$-strongly connected subdigraph, then modify $D_1$ by adding the arc $e$. Similarly, if $D_2+e$ contains no $k$-strongly connected subdigraph, then modify $D_2$ by adding the arc $e$. Repeat this operation for every arc $e\in A((D[S])^c)$. Thus, with the addition of at most $2\binom{k-1}{2}-m$ arcs, we obtain two $\mathcal{D}_k$-saturated digraphs $D_1'$ and $D_2'$. It concludes by induction that both $|A(D_1')|\geq \binom{b_1}{2}+(k-1)(2(b_1+k-1)-k)$ and $|A(D_2')|\geq \binom{b_2}{2}+(k-1)(2(b_2+k-1)-k)$. Thus
	\begin{flalign*}
		|A(D)|\geq&|A(D_1')|+|A(D_2')|-2\binom{k-1}{2}+|[V(B_1),V(B_2)]_D|\\
		\geq& \binom{b_1}{2}+(k-1)(2(b_1+k-1)-k)+\binom{b_2}{2}+(k-1)(2(b_2+k-1)-k)-2\binom{k-1}{2}+b_1 b_2\\
		=&\binom{n-k+1}{2}+(k-1)(2n-k).&
	\end{flalign*}
This completes the proof.
	\hfill\qedsymbol
	
	\section{Extremal number of $\mathcal{D}_k$} \label{Se4}
	
    In this section, we aim to determine the maximum number of arcs in an $n$-vertex $\mathcal{D}_k$-saturated digraph. Firstly, we define the following family of digraphs.

	\begin{Def}\label{d41}
		Let $k \geq 2$ and $n \geq 2(k-1)$ be integers. Suppose that $n=t(k-1)+r$, where $0\leq r<k-1$.  The family of digraphs $\mathcal{D}_U(n, k)$ is defined as follows. A digraph $D\in\mathcal{D}_U(n, k)$ if its vertex set can be partitioned into $t+1$ disjoint subsets $V_0, V_1, \ldots, V_t$ satisfying the following conditions:
		\begin{itemize}
			\item[\rm{(i)}] $|V_0|=|V_1|=\ldots=|V_{t-1}|=k-1$ and $|V_t|=r$;
			
			\item[\rm{(ii)}] $D[V_0]$ is a transitive tournament;
			
			\item[\rm{(iii)}] For each $i \in \{1,2, \ldots, t\}$, the induced subdigraph $D[V_i]$ is a complete digraph;
			
			\item[\rm{(iv)}] All arcs between $V_0$ and $\bigcup_{i=1}^t V_i$ are present in both directions. That is, for any $u\in V_0$ and $v\in \cup_{i=1}^t V_i$, both $(u,v)$ and $(v,u)$ are arcs in $D$;
			
			\item[\rm{(v)}] $D[\bigcup_{i=1}^t V_i]\setminus \bigcup_{i=1}^t A(D[V_i])$ forms a $t$-partite transitive tournament $T_{|V_1|,\ldots,|V_t|}$;
			
			\item[\rm{(vi)}] There are no other arcs in $D$.
		\end{itemize}
	\end{Def}
	
	To confirm the validity of $\mathcal{D}_U(n,k)$ as a candidate for the extremal construction, we initially verify that for any $D \in \mathcal{D}_U(n,k)$, the digraph $D$ is $\mathcal{D}_k$-saturated.
	
	\begin{thm}\label{t42}
		Let $k\geq 2$ and $n\geq 2(k-1)$ be integers. If $D \in \mathcal{D}_U(n,k)$, then
		\begin{itemize}
			\item[\rm{(i)}] $D$ is $\mathcal{D}_k$-saturated;
			
			\item[\rm{(ii)}] $|A(D)|\leq \frac{3}{2}(k-\frac{4}{3})(n-k+1)+\binom{n}{2}$, with equality if and only if $n$ is a multiple of $k-1$.
		\end{itemize}
	\end{thm}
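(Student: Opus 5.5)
The plan is to prove the two parts separately. For (i) I verify the two halves of saturation directly from the structure in Definition \ref{d41}. For (ii) I count arcs and compare the count with the stated bound algebraically. Throughout, write $W=\bigcup_{i=1}^t V_i$, and orient the $t$-partite transitive tournament so that all arcs between $V_i$ and $V_j$ go from $V_i$ to $V_j$ when $i<j$. Note that $t\ge 2$ because $n\ge 2(k-1)$, so $|V_1|=k-1$.

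\textbf{No $k$-strongly connected subdigraph.} Suppose $H\subseteq D$ has $\kappa(H)\ge k$, so $|V(H)|\ge k+1$.
\begin{itemize}
\item If $V(H)$ meets two different parts $V_i,V_j$ with $1\le i<j$, put $S=V(H)\cap V_0$, so $|S|\le k-1$. Then $H-S$ lies in $D[W]$ and has at least two vertices. The acyclic ordering of the parts means there is no arc from $V_j$ back to $V_i$, so $H-S$ is not strong. Thus $S$ is a separator of size at most $k-1$, a contradiction.
\item Otherwise $V(H)\subseteq V_0\cup V_i$ for a single $i$. Since $|V_i|\le k-1<k+1$, the set $V(H)\cap V_0$ is nonempty. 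Let $z$ be the sink of the transitive tournament $D[V(H)\cap V_0]$. Then every out-neighbour of $z$ in $H$ lies in $V_i$, so $d^+_H(z)\le k-1$. This contradicts the fact that a $k$-strongly connected digraph has minimum out-degree at least $k$.
\end{itemize}
This covers the case $V(H)\subseteq V_0$ as well, since the sink argument applies with no vertices of $W$.

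\textbf{Adding any missing arc creates a $k$-strongly connected subdigraph.} By (iii) and (iv) the only missing arcs are of two types.
\begin{itemize}
\item \emph{Reversed tournament arcs inside $V_0$.} Let the missing arc be $e=(y,x)$ with $x,y\in V_0$. Then $(D+e)[V_1\cup\{x,y\}]$ is the complete digraph $K_{k+1}^*$, because $V_1$ is a $(k-1)$-clique bidirectionally joined to $V_0$. This digraph is $k$-strongly connected.
\item \emph{Backward arcs in $W$.} Let the missing arc be $e=(v,u)$ with $u\in V_i$, $v\in V_j$ and $i<j$. Take $H=(D+e)[V_0\cup V_i\cup V_j]$ and let $S$ be any set of $k-1$ vertices of $H$.
  \begin{itemize}
  \item If $S=V_0$, then $H-S$ consists of the cliques $V_i$ and $V_j$, all arcs from $V_i$ to $V_j$, and the back arc $e$. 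This digraph is strong.
  \item If $S\neq V_0$, then $H-S$ still contains a vertex $w\in W$, because $|V_i\cup V_j|\ge (k-1)+1>|S|$. Every remaining vertex of $V_0$ is joined to $w$ in both directions. The remaining part of $W$ is strong, by the previous item if $V_0\subseteq S$ fails or directly otherwise. Hence $H-S$ is strong.
  \end{itemize}
  So $\kappa(H)\ge k$.
\end{itemize}

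\textbf{Arc count for (ii).} Put $m=n-k+1=(t-1)(k-1)+r$. The arcs of $D$ are:
\begin{itemize}
\item $\binom{k-1}{2}$ arcs inside $V_0$;
\item $2(k-1)m$ arcs between $V_0$ and $W$;
\item $\binom{m}{2}+\sum_{i}\binom{|V_i|}{2}=\binom{m}{2}+(t-1)\binom{k-1}{2}+\binom{r}{2}$ arcs inside $W$, since each pair of vertices of $W$ contributes one arc, plus a second arc when both lie in the same part.
\end{itemize}
Using $\binom{n}{2}=\binom{m}{2}+\binom{k-1}{2}+(k-1)m$ (Observation \ref{o38}), the difference between the bound and $|A(D)|$ simplifies as follows. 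The terms $\binom{m}{2}$ and $\binom{k-1}{2}$ cancel. The linear terms leave $\frac{k-2}{2}m$. Substituting $m=(t-1)(k-1)+r$ cancels $(t-1)\binom{k-1}{2}$ and leaves
$$\frac{(k-2)r}{2}-\binom r2=\frac{r(k-1-r)}{2}.$$
Since $0\le r<k-1$, this is nonnegative, and it equals zero exactly when $r=0$, that is, when $k-1$ divides $n$.

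The main obstacle is choosing the right witness subdigraph in the saturation part. Using all of $D+e$ fails, because $V_0$ itself is a separator of $D$. The fix is to restrict to $V_0\cup V_i\cup V_j$ for backward arcs, and to the clique $V_1\cup\{x,y\}$ for arcs inside $V_0$.
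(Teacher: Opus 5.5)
Your overall route matches the paper's. You use the same case split to rule out $k$-strongly connected subdigraphs; in the one-part case your sink/out-degree argument replaces the paper's observation that $V(H)\cap V_i$ separates $H$, and both work. You use the same witnesses, $V_i\cup\{x,y\}$ with $|V_i|=k-1$ and $(D+e)[V_0\cup V_i\cup V_j]$. Your slack computation $r(k-1-r)/2$ correctly supplies the ``simple calculation'' the paper leaves out in (ii).

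There is, however, a false step in the backward-arc case when $S\neq V_0$. You claim that the remaining part of $W$, namely $(D+e)[(V_i\cup V_j)\setminus S]$, is strong. This fails whenever $S$ contains an endpoint of $e$ but leaves vertices in both $V_i$ and $V_j$, because then no arc goes from $V_j\setminus S$ back to $V_i\setminus S$.

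\emph{Counterexample.} Take $k=3$, $V_0=\{a,b\}$, $V_1=\{u,u'\}$, $V_2=\{v,v'\}$, $e=(v,u)$ and $S=\{a,u\}$. The vertices $u',v,v'$ induce only the arcs $(u',v)$, $(u',v')$ and $[v,v']$, so nothing enters $u'$. Your qualifier ``if $V_0\subseteq S$ fails'' does not rescue this, since it always holds when $|S|=|V_0|$ and $S\neq V_0$.

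The conclusion that $H-S$ is strong is still true, but for a different reason, which you should state.
\begin{itemize}
\item Because $|S|\le k-1=|V_0|$ and $S\neq V_0$, some vertex $x\in V_0\setminus S$ survives.
\item By (iv), $x$ is joined in both directions to every vertex of $W\setminus S$.
\item Your $w\in W\setminus S$ is joined in both directions to every vertex of $V_0\setminus S$.
\end{itemize}
Hence every vertex of $H-S$ reaches $w$ and is reached from $w$; vertices of $W\setminus S$ do so through $x$. So $H-S$ is strong.

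This hub argument works for every $S$ with $|S|\le k-1$ and $S\neq V_0$. Combined with $|V(H)|\ge 2k-1\ge k+1$, it gives $\kappa(H)\ge k$ directly, without restricting to sets of size exactly $k-1$.
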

	\noindent{\bf Proof.} (i) Firstly, we show that $D$ contains no $k$-strongly connected subdigraph. By Definition \ref{d41}, $V_0$ is a separator of $D$ with $|V_0|=k-1$, so $\kappa(D) \leq k-1 < k$. Now consider any subdigraph $H \subseteq D$.
	
	Assume there exists exactly one $i \in \{1,2, \ldots, t\}$ such that $V(H) \cap V_i\neq\emptyset$. If $V(H) \cap V_0\neq \emptyset$, then $V(H) \cap V_i$ is a separator of $H$. By $|V(H) \cap V_i| \leq |V_i| \leq k-1$, we know $H$ is not $k$-strongly connected. Otherwise, $H$ is not $k$-strongly connected as $|V(H)|\leq |V_i|\leq k-1$.
	
	Now suppose that there exist at least two distinct indices $i, j \in \{1,2, \ldots, t\}$ such that $V(H) \cap V_i \neq \emptyset$ and $V(H) \cap V_j \neq \emptyset$.	If $V(H)\cap V_0 \neq \emptyset$, then $V_0 \cap V(H)$ is a separator of $H$. By $|V_0 \cap V(H)|\leq k-1$, we know $\kappa(H) <k$.
	If $V(H) \subseteq \bigcup_{i=1}^t V_i$, then by Definition \ref{d41}, $H$ can not be strongly connected. Therefore, $D$ contains no $k$-strongly connected subdigraph.
	
	It remains to show that for any arc $e\in A(D^{c})$, the digraph $D+e$ contains a $k$-strongly connected subdigraph. It is clear that $A(D^c) \neq \emptyset$. Now we consider the following two cases.
	
	\noindent{\bf Case 1.} $e=(u,v)\in A(D^c[V_0])$.
	
	Since $D[V_0]$ is a transitive tournament and $(u,v)\notin A(D)$, the reverse arc $(v, u)$ is present in $D$. Thus, for every $i\in \{1,2,\ldots,t-1\}$, $D'=D[V_i\cup\{u,v\}]\subseteq D+(u,v)$ forms a complete subdigraph $K_{k+1}^*$, which is $k$-strongly connected. Hence, $D+e$ contains a $k$-strongly connected subdigraph.
	
	\noindent{\bf Case 2.} $e=(v_j,v_i)\in [V_i,V_j]_{D^c}$ for some $1\leq i<j\leq t$.
	
	By Definition~\ref{d41}, $D[\bigcup_{i=1}^t V_i]\setminus \bigcup_{i=1}^t A(D_i)$ forms a $t$-partite transitive tournament $T_{|V_1|,\ldots,|V_t|}$, which implies that $(v_i,v_j)\in A(D)$. Let $D'' = D[V_0 \cup V_i \cup V_j] + e$, we will prove that $D''$ is $k$-strongly connected.
	
	Let $S \subseteq V(D'')$ with $|S| \leq k-1$. We show that $D'' - S$ is strongly connected. If $S=V_0$, then $D''-S$ is strongly connected as $D[V_i]$ and $D[V_j]$ are strongly connected in $D''-S$ and $[v_i,v_j]\in A(D''-S)$. Otherwise, by Definition~\ref{d41}, it is clear that for any two vertices $u,v\in V(D''-S)$, there exist both $(u, v)$-dipath and $(v, u)$-dipath. Thus, $D'' - S$ is strongly connected, and $D''$ is $k$-strongly connected. This completes the proof that $D$ is $\mathcal{D}_k$-saturated.
	
	(ii) By a simple calculation, we have $|A(D)|\leq \frac{3}{2}(k-\frac{4}{3})(n-k+1)+\binom{n}{2}$, where the equality holds if and only if $n$ is a multiple of $k-1$.\hfill\qedsymbol
	
    Theorem \ref{t42} provides a class of $\mathcal{D}_k$-saturated digraphs, which leads us to conjecture that these digraphs attain the extremal number for sufficiently large $n$.
	
	\begin{conj}
		For sufficiently large $n$, $$\mathrm{ex}(n,\mathcal{D}_k)= \frac{3}{2}(k-\frac{4}{3})(n-k+1)+\binom{n}{2}.$$
	\end{conj}
	
	 Since every $\mathcal{D}_k$-saturated digraph contains no $k$-strongly connected subdigraph, it suffices to derive an upper bound on the number of arcs in an $n$-vertex digraph without $k$-strongly connected subdigraph. Firstly, we present a preliminary upper bound in the following lemma.
	
	\begin{lem}\label{l43}
		Let $k\geq 2$ and $n\geq k$ be integers. If $D$ is a digraph of order $n$ that contains no $k$-strongly connected subdigraph, then
		$$|A(D)|\leq 2\binom{n}{2}-\frac{(n-k+1)^2-1}{3}.$$
	\end{lem}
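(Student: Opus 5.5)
We argue by induction on $n$. Write $f(m)=\frac{m^2-1}{3}$ for $m\ge 1$ and $f(m)=0$ for $m\le 0$; note $f(1)=0$. The claim is equivalent to saying that the number of \emph{missing} ordered pairs, $2\binom{n}{2}-|A(D)|$, is at least $f(n-k+1)$. For the induction it is convenient to prove this for every digraph of any order $m\ge 1$: digraphs of order $m\le k-1$ need at least $f(m-k+1)=0$ missing arcs, and for $m=k$ the bound $f(1)=0$ is also trivial. So let $n\ge k+1$ and assume the statement for all smaller orders.

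Since $D$ has at least $k+1$ vertices and contains no $k$-strongly connected subdigraph, $D$ itself is not $k$-strongly connected. Hence there is a set $S$ with $s=|S|\le k-1$ such that $D-S$ is not strongly connected (and $D-S$ is nontrivial, since $n-s\ge 2$). As in the separation used in Lemma~\ref{l24}, take a source strong component of $D-S$ and split $V(D)\setminus S$ into nonempty sets $A$ and $B$, with $|A|=a$ and $|B|=b$, such that no arc goes from $B$ to $A$. Then all $ab$ pairs in $(B,A)$ are missing. The subdigraphs $D[S\cup A]$ and $D[S\cup B]$ still contain no $k$-strongly connected subdigraph, and they have fewer than $n$ vertices, so induction applies to them. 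Their missing pairs are disjoint from $(B,A)$, and they are disjoint from each other except for pairs inside $S$. To avoid double counting, we apply the induction bound to $D[S\cup A]$ and count for $D[S\cup B]$ only the missing pairs not lying inside $S$. The simplest honest route is this: since $f$ depends only on order and missing pairs inside $S$ could be shared, we first reduce to $D[S]$ complete by adding all missing arcs inside $S$ to $D$. Adding arcs can create a $k$-strong subdigraph, so instead we should just note the resulting inequality directly:
\[
\#\text{missing}(D)\ \ge\ f(s+a-k+1)+f(s+b-k+1)+ab
\]
whenever we can charge missing pairs of $S$ only once. If double counting of pairs inside $S$ is a problem, the fallback is to apply induction to $D[S\cup A]$ and to $D[B]$ alone, which is less wasteful than it looks. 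I expect this bookkeeping to be the delicate point and would check it first.

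The rest is an inequality. Put $d=k-1-s\ge 0$, $x=a-d$ and $y=b-d$, so that $n-k+1=x+y+d=a+b-d\ge 2$. We must show $f(x)+f(y)+ab\ge \frac{(a+b-d)^2-1}{3}$, and we split into three cases.
\begin{itemize}
\item If $x,y\ge 1$, the difference between the two sides equals $\frac{xy-1}{3}+\frac{2}{3}d(x+y)+\frac{2}{3}d^2\ge 0$.
\item If $x\ge1\ge y$, so that $b\le d$, the difference equals $\frac{b(a+2d-b)}{3}\ge 0$.
\item If $x,y\le 0$, so that $a,b\le d$, then $0<a+b-d\le\min(a,b)$, which gives $(a+b-d)^2\le ab$.
\end{itemize}
This closes the induction.
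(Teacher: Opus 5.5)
\textbf{The gap: your displayed bound is false.} The bound says the number of missing pairs of $D$ is at least $f(s+a-k+1)+f(s+b-k+1)+ab$. It is not merely unproved. As you suspected, a missing pair inside $S$ gets charged to both $D[S\cup A]$ and $D[S\cup B]$, and this makes the bound fail, so the three-case computation built on it proves nothing.

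Take $k=3$, $S=\{u,v\}$, $A=\{a_1,a_2\}$, $B=\{b_1,b_2\}$, and let $D$ contain every arc except $(u,v)$ and the four arcs from $B$ to $A$.
A $3$-strongly connected subdigraph $H$ needs at least $4$ vertices.
If $V(H)$ meets both $A$ and $B$, then $V(H)\cap S$, of size at most $2$, separates $H$.
Otherwise $V(H)=S\cup A$ or $V(H)=S\cup B$, and deleting the two vertices outside $S$ leaves $u,v$ joined only by $(v,u)$.
So $D$ has no $3$-strongly connected subdigraph, and $D-S$ has source component $D[A]$. Then $D$ misses exactly $5=f(4)$ pairs, so the lemma is tight here. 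Your bound demands $f(2)+f(2)+4=6$: the single missing pair $(u,v)$ is the entire contribution of both sides.

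\textbf{Your fallback is the right idea.} It is exactly the paper's proof: apply the induction hypothesis to one side together with $S$ only. Then the missing pairs inside $D[S\cup A]$, inside $D[B]$, and in $(B,A)$ are pairwise disjoint. What you have not supplied is the choice of side and the check.

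The side keeping $S$ must be the larger one. With $s=k-1$ and $D[B]$ contributing nothing (e.g.\ $b\le k-1$), the requirement $ab+\frac{a^2-1}{3}\ge\frac{(a+b)^2-1}{3}$ is equivalent to $a\ge b$. It fails for $a=1$, $b=2$, $k\ge 3$, where you get $2<\frac{8}{3}$. So attaching $S$ to the source component as written does not work in general.

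The $ab$ pairs in $(B,A)$ are missing regardless of which side carries $S$. So you may attach $S$ to the larger side, say $a\ge b$. With $d=k-1-s$, you then need $f(a-d)+ab\ge f(a+b-d)$.
If $a>d$, the difference is $\frac{b(a+2d-b)}{3}$. This is the same expression as in your second case, now nonnegative because $a\ge b$.
If $a\le d$, then $b\le a\le d$, and your third case gives $(a+b-d)^2\le ab$.

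The paper does this with $|S|=k-1$, applying induction to $D_2=D[S\cup V(B_2)]$. Its final inequality tacitly needs $|B_2|\ge|B_1|$, which is the same without-loss-of-generality choice.
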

	
	\noindent{\bf Proof.} We proceed by induction on $n$. Let $D$ be an $n$-vertex digraph that contains no $k$-strongly connected subdigraph. If $n=k$, then $|A(D)|\leq 2\binom{k}{2}= 2\binom{k}{2}-\frac{(k-k+1)^2-1}{3}$.
	
	Now we consider the case when $n\geq k+1$. Assume the lemma holds for all digraphs of order less than $n$ that contain no $k$-strongly connected subdigraph. Since $D$ contains no $k$-strongly connected subdigraph, it contains a separator $S$ of $D$ with $|S| = k-1$. Let $B_1$ be the source strong component of $D-S$ and $B_2=D-(S\cup V(B_1))$. Clearly, $D$ misses at least $|B_1||B_2|$ arcs between $B_1$ and $B_2$. By the induction hypothesis, $D_2=D[S\cup V(B_2)]$ misses at least $\frac{|B_2|^2-1}{3}$ arcs. Hence,
	\begin{flalign*}
		|A(D)|\leq& 2\binom{n}{2}-|B_1||B_2|-\frac{|B_2|^2-1}{3}\\
		\leq &2\binom{n}{2}-\frac{|B_1|^2+2|B_1||B_2|+|B_2|^2-1}{3}\\
		=&2\binom{n}{2}-\frac{(n-k+1)^2-1}{3}.
	\end{flalign*}
	This completes the proof. \hfill\qedsymbol
	
	Lemma \ref{l43} provides a valid upper bound, it is not tight enough to match the arc number of \(\mathcal{D}_U(n,k)\) in Theorem \ref{t42}(ii). To narrow this gap, we prove a stronger upper bound, which is closer to the candidate extremal value.
	
	\begin{thm}
		Let $k\geq 2$ and $n \geq 3(k-1)$ be integers. If $D$ is a digraph of order $n$ that contains no $k$-strongly connected subdigraph, then $|A(D)|\leq\binom{n-k+1}{2}+\frac{17}{6}(k-1)(n-k+1).$
	\end{thm}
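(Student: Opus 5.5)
The plan is induction on $n$, splitting $D$ along a minimum separator exactly as in the proofs of Theorem~\ref{t39} and Lemma~\ref{l43}. Write $m=n-k+1$ and $f(m)=\binom{m}{2}+\frac{17}{6}(k-1)m$. If $D$ has no $k$-strongly connected subdigraph and $n\geq k+1$, then $D$ is not $k$-strongly connected. So there is a separator $S$ with $|S|\le k-1$, which we may enlarge to exactly $k-1$ vertices. Let $B_1$ be the source strong component of $D-S$, $B_2=D-(S\cup V(B_1))$, $b_i=|V(B_i)|$ and $D_i=D[S\cup V(B_i)]$. No arc goes from $B_2$ to $B_1$, so
$$|A(D)|=|A(D_1)|+|A(D_2)|-|A(D[S])|+|[V(B_1),V(B_2)]_D|\le |A(D_1)|+|A(D_2)|-|A(D[S])|+b_1b_2 .$$
Both $D_i$ inherit the property of having no $k$-strongly connected subdigraph. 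By Observation~\ref{o38}, $f(b_1)+f(b_2)+b_1b_2=f(b_1+b_2)=f(m)$, since the linear term is additive. So if both $D_i$ satisfied $|A(D_i)|\le f(b_i)$, the bound for $D$ would follow at once, even without using the $-|A(D[S])|$ term.

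The difficulty is that the theorem only holds for $n\ge 3(k-1)$, i.e.\ $m\ge 2k-2$. A small piece, with $b_i<2k-2$, may contain up to $2\binom{b_i+k-1}{2}$ arcs, which exceeds $f(b_i)$ by roughly $(k-1)(k-2)$ because of the arcs inside $S$. I would therefore prove a statement valid for every $n\ge k$, namely
$$|A(D)|\le f(m)+(k-1)(k-2)-\varepsilon(m),$$
where $\varepsilon(m)$ is a correction term, for instance of the form $\min\{(k-1)(k-2),\ \alpha(k-1)(m-\beta)\}$. The correction is chosen so that it vanishes for small $m$ and equals the full $(k-1)(k-2)$ once $m\ge 2(k-1)$. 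The subtracted term $-|A(D[S])|$ is what absorbs the extra $(k-1)(k-2)$. If $D[S]$ is nearly complete, one copy of $(k-1)(k-2)$ cancels. If $D[S]$ is sparse, then each $D_i$ is missing many arcs inside $S$, and I would feed this into the induction by tracking $|A(D_i)|$ together with the missing arcs in $D_i[S]$. In effect the inductive quantity becomes $|A(D)|-|A(D[S])|$ for a designated $(k-1)$-set, as in the augmentation step in the proof of Theorem~\ref{t39}.

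For the base range $k\le n<3(k-1)$, I would use the trivial bound $2\binom{n}{2}$ or Lemma~\ref{l43}, that is $2\binom{n}{2}-\frac{m^2-1}{3}$. In terms of $m$ this reads $m(m-1)+2(k-1)m+(k-1)(k-2)-\frac{m^2-1}{3}$. Comparing this with $\binom{m}{2}+c(k-1)m+(k-1)(k-2)$ for $m\le 2(k-1)$, the excess $\tfrac{m^2}{6}-\tfrac{m}{2}$ is at most about $\tfrac{m}{3}(k-1)$. This suggests $c=2+\tfrac{1}{3}$ would suffice here, with the remaining slack up to $\tfrac{17}{6}$ paying for the correction $\varepsilon$. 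The recombination step must then be checked in three cases: both pieces small, one small and one large, and both large. The large–large case is the clean identity above.

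I expect the main obstacle to be choosing $\varepsilon$, and hence the constant $\tfrac{17}{6}$, so that the mixed case goes through. This is the case where a small lobe of about $k-1$ outside vertices attaches to a large one, which is exactly the configuration in $\mathcal{D}_U(n,k)$. My first attempt would be to take $\varepsilon$ linear in $m$ up to $2(k-1)$ and optimize its slope against the Lemma~\ref{l43} bound. I expect the worst case at $b_1\approx k-1$ to force the value $\tfrac{17}{6}$.
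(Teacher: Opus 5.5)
\paragraph{A genuine gap in the proposal.} What you have is a plan rather than a proof. The correction $\varepsilon$ is never fixed, and none of the three recombination cases is checked. More seriously, the mechanism you propose for the mixed case does not work as stated.

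You apply a strengthened bound $f(m)+(k-1)(k-2)-\varepsilon(m)$ to \emph{both} lobes and rely on $-|A(D[S])|$ to cancel one copy of $(k-1)(k-2)$. But a bound on $|A(D_1)|$ cannot see that $D_1[S]$ is sparse. Take $b_1=1$ and $b_2\ge 2(k-1)$.
\begin{itemize}
\item Since $K_k^*$ has no $k$-strongly connected subdigraph, your strengthened statement at $m=1$ forces $\varepsilon(1)\le\frac56(k-1)$.
\item Your recombination then gives only
\[
|A(D)|\le f(m)+(k-1)(k-2)-\varepsilon(1)-|A(D[S])|.
\]
\item So you would need $|A(D[S])|\ge (k-1)(k-2)-\frac56(k-1)$.
\end{itemize}
This fails for every $k\ge 4$ when $D[S]$ is a transitive tournament, which is exactly how $V_0$ looks in $\mathcal{D}_U(n,k)$. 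Your suggested remedy, ``tracking the missing arcs in $D_i[S]$,'' would require a reformulated inductive statement, and you do not supply one.

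\paragraph{The missing idea.} No strengthening is needed if the small lobe is never fed into an inductive statement. Say $b_1\le b_2$, and write
\[
|A(D)|=|A(D[V(B_1)])|+|[V(B_1),S]_D|+|A(D_2)|+|[V(B_1),V(B_2)]_D|,
\]
so that the arcs inside $S$ are charged to $D_2$ only. Then bound the first two terms directly:
\begin{itemize}
\item if $b_1\le k-1$, they are at most $2\binom{b_1}{2}+2b_1(k-1)$;
\item if $k-1<b_1<2(k-1)$, apply Lemma~\ref{l43} to $D[V(B_1)]$ and add $2b_1(k-1)$.
\end{itemize}
In both cases the result is at most $\binom{b_1}{2}+\frac{17}{6}(k-1)b_1$. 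Observation~\ref{o38} then closes the step:
\begin{itemize}
\item use the \emph{original} statement for $D_2$ when $b_2\ge 2(k-1)$;
\item use Lemma~\ref{l43} for $D_2$ when $b_2<2(k-1)$ too, which reduces to a direct quadratic check;
\item when $b_1\ge 2(k-1)$, your large--large identity applies.
\end{itemize}
The base range $3(k-1)\le n\le 4(k-1)$ follows from Lemma~\ref{l43} alone. Writing $n=\alpha(k-1)$, the excess over the target is $\frac16(k-1)^2(\alpha-3)(\alpha-4)-\frac12(\alpha+1)(k-1)+\frac13\le 0$.

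This also shows where $\frac{17}{6}$ comes from. To leading order it is the least constant for which Lemma~\ref{l43} still handles a lobe with $2(k-1)$ vertices outside $S$: the endpoint $\alpha=3$, and $b_1\to 2(k-1)$ in the small-lobe estimate. It is not forced by the configuration $b_1\approx k-1$, as you guessed.
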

	
	\noindent{\bf Proof.} We proceed by induction on $n$. Let $D$ be an $n$-vertex digraph that contains no $k$-strongly connected subdigraph. Let $n = \alpha (k-1)$, where $\alpha \geq 3$ since $n\ge 3(k-1)$. We consider the following two cases based on the value of $\alpha$.
	
	\noindent{\bf Case 1.} $3 \leq \alpha \leq 4.$
	
	By Lemma \ref{l43}, we have $|A(D)|\leq 2\binom{n}{2}-\frac{(n-k+1)^2-1}{3}$. It suffices to show that
	$$2\binom{n}{2}-\frac{(n-k+1)^2-1}{3}\leq\binom{n-k+1}{2}+\frac{17}{6}(k-1)(n-k+1),$$
	which is equivalent to
	\begin{flalign*}
		&2\binom{n}{2}-\frac{(n-k+1)^2-1}{3}-\binom{n-k+1}{2}-\frac{17}{6}(k-1)(n-k+1)\\
		=&\frac{1}{6}(k-1)^2(\alpha^2-7\alpha+12)-\frac{1}{2}(\alpha+1)(k-1)+\frac{1}{3}\\
		\leq &\frac{1}{6}(k-1)^2(\alpha^2-7\alpha+12)\leq 0.
	\end{flalign*}
	The function $f(\alpha)=\alpha^2-7\alpha+12=(\alpha-3)(\alpha-4)$ is non-positive for $3 \leq \alpha \leq 4$. Hence, the result holds in this case.
	
	\noindent{\bf Case 2.} $\alpha > 4$.
	
	Assume the result holds for each value smaller than $n$. Since $D$ contains no $k$-strongly connected subdigraph, it contains a separator $S$ of $D$ with $|S| = k-1$. Let $B_1$ be the source strong component of $D-S$, and let $B_2=D-(S\cup V(B_1))$. Set $|B_1| =b_1 = \beta_1(k-1)$, and $|B_2| = b_2 = \beta_2(k-1)$. Then $b_1 + b_2 = n - k + 1$. We may assume $b_1 \leq b_2$, so that $\beta_1\leq \beta_2$. Define $D_1=D[S\cup V(B_1)]$ and $D_2=D[S\cup V(B_2)]$. We divide into three subcases based on the value of $\beta_1$.
	
	\noindent{\bf Subcase 2.1.} $\beta_1 \leq 1$.
	
	In this subcase, $\beta_2+1=\alpha-\beta_1>3$, so the induction hypothesis holds for $D_2$. Thus,
	\begin{flalign*}
		|A(D)|=&|A(D[V(B_1)])|+|[V(B_1),S]_D|+|A(D_2)|+|[V(B_1),V(B_2)]_D|\\
		\leq & 2\binom{b_1}{2}+2b_1(k-1)+\binom{b_2}{2}+\frac{17}{6}(k-1)b_2+b_1 b_2\\
		=&\binom{b_1}{2}+\binom{b_2}{2}+b_1 b_2+\frac{b_1^2-b_1}{2}+2b_1(k-1)+\frac{17}{6}(k-1)b_2.
	\end{flalign*}
	Since $0<\beta_1\leq 1$, we have
	\begin{equation}\label{eq1}
		\begin{aligned}
		&\frac{b_1^2-b_1}{2}+2b_1(k-1)-\frac{17}{6}(k-1)b_1\\
		= &\frac{\beta_1^2 (k-1)^2-\beta_1(k-1)}{2}-\frac{5}{6}\beta_1(k-1)^2\\
		= &\frac{3\beta_1^2-5 \beta_1}{6}(k-1)^2-\frac{\beta_1(k-1)}{2}\\
		\leq & \frac{\beta_1(\beta_1-\frac{5}{3})}{2}(k-1)^2\leq 0.
	\end{aligned}
	\end{equation}
	Combining Observation \ref{o38} and the inequality \eqref{eq1}, we have
	\begin{flalign*}
		|A(D)|\leq & \binom{b_1+b_2}{2}+\frac{17}{6}(k-1)(b_1+b_2)\\
		= &\binom{n-k+1}{2}+\frac{17}{6}(k-1)(n-k+1).
	\end{flalign*}
	\noindent{\bf Subcase 2.2.} $\beta_1 \geq 2$.
	
	Since $\beta_2\geq \beta_1$, we have $b_2+k-1\geq b_1+k-1=(\beta_1+1)(k+1) \geq 3(k+1)$. In this subcase, both $D_1$ and $D_2$ have at least $3(k-1)$ vertices, so by the induction hypothesis, we obtain that
	\begin{flalign*}
		|A(D)|&=|A(D_1)|+|A(D_2)|-|A(D[S])|+|[V(B_1), V(B_2)]_D|\\
		&\leq \binom{b_1}{2}+\frac{17}{6}(k-1)b_1+\binom{b_2}{2}+\frac{17}{6}(k-1)b_2+b_1 b_2\\
		&=\binom{b_1+b_2}{2}+\frac{17}{6}(k-1)(b_1+b_2)\\
		&=\binom{n-k+1}{2}+\frac{17}{6}(k-1)(n-k+1).
	\end{flalign*}
	
	\noindent{\bf Subcase 2.3.} $1< \beta_1 < 2$.
	
	In this subcase, $\beta_2=\alpha-\beta_1-1 > 1$. If $\beta_2\geq 2$, then $D_2$ satisfies the induction hypothesis. By Lemma \ref{l43}, $|A(D[V(B_1)])|\leq 2\binom{b_1}{2}-\frac{(b_1-k+1)^2-1}{3}$. Since $1< \beta_1 < 2$, we have
	\begin{equation}\label{eq2}
		\begin{aligned}
			&\binom{b_1}{2}-\frac{(b_1-k+1)^2-1}{3}+2b_1(k-1)- \frac{17}{6}(k-1)b_1\\
			= &\frac{\beta_1^2- \beta_1-2}{6}(k-1)^2-\frac{\beta_1(k-1)}{2}+\frac{1}{3}\\
			\leq & \frac{(\beta_1+1)(\beta_1-2)}{6}(k-1)^2\leq 0.
		\end{aligned}
	\end{equation}
	 Combining the inequality \eqref{eq2} and the induction hypothesis for $D_2$, we obtain that
	\begin{flalign*}
		|A(D)| & = |A(D[V(B_1)])|+|[V(B_1),S]_D|+|A(D_2)|+|[V(B_1),V(B_2)]_D|\\
		& \leq 2\binom{b_1}{2}-\frac{(b_1-k+1)^2-1}{3}+2b_1(k-1)+\binom{b_2}{2}+\frac{17}{6}(k-1)b_2+b_1 b_2\\
		& =\binom{b_1}{2}+\binom{b_2}{2}+b_1 b_2+\binom{b_1}{2}-\frac{(b_1-k+1)^2-1}{3}+2b_1(k-1)+\frac{17}{6}(k-1)b_2\\
		&\leq \binom{b_1+b_2}{2}+\frac{17}{6}(k-1)(b_1+b_2)\\
		&=\binom{n-k+1}{2}+\frac{17}{6}(k-1)(n-k+1).
	\end{flalign*}
	If $1 < \beta_2 < 2$, then we use Lemma \ref{l43} to $D[V(B_1)]$ and $D_2$, we obtain that
	\begin{flalign*}
		|A(D)| & = |A(D[V(B_1)])|+|[V(B_1),S]_D|+|A(D_2)|+|[V(B_1),V(B_2)]_D|\\
		& \leq 2\binom{b_1}{2}-\frac{(b_1-k+1)^2-1}{3}+2b_1(k-1)+2\binom{b_2+k-1}{2}-\frac{b_2^2-1}{3}+b_1b_2\\
		& = (k-1)^2(\frac{2}{3}\beta_1^2+\frac{8}{3}\beta_1+\frac{2}{3}\beta_2^2+2\beta_2+\beta_1\beta_2+\frac{2}{3})-(\beta_1+\beta_2+1)(k-1).
	\end{flalign*}
	Since \begin{flalign*}
		&\frac{2}{3}\beta_1^2+\frac{8}{3}\beta_1+\frac{2}{3}\beta_2^2+2\beta_2+\beta_1\beta_2+\frac{2}{3}- \frac{(\beta_1+\beta_2)^2}{2}-\frac{17}{6}(\beta_1+\beta_2)\\
		=&\frac{1}{6}\beta_1^2-\frac{1}{6}\beta_1+\frac{1}{6}\beta_2^2-\frac{5}{6}\beta_2+\frac{2}{3}\\
		\leq & \frac{1}{6}\beta_1^2-\frac{1}{6}\beta_1+\frac{1}{6}\beta_1^2-\frac{5}{6}\beta_1+\frac{2}{3}\\
		=&\frac{1}{3}\beta_1(\beta_1-3)+\frac{2}{3}\\
		\leq& 0
		\end{flalign*}
	holds for $1<\beta_1\leq \beta_2 <2$, we have \begin{flalign*}
		|A(D)|\leq & (k-1)^2(\frac{2}{3}\beta_1^2+\frac{8}{3}\beta_1+\frac{2}{3}\beta_2^2+2\beta_2+\beta_1\beta_2+1)-(\beta_1+\beta_2+1)(k-1)\\
		\leq & (k-1)^2(\frac{(\beta_1+\beta_2)^2}{2}+\frac{17}{6}(\beta_1+\beta_2))-\frac{\beta_1+\beta_2}{2}(k-1)\\
		=&\binom{b_1+b_2}{2}+\frac{17}{6}(k-1)(b_1+b_2)\\
		=&\binom{n-k+1}{2}+\frac{17}{6}(k-1)(n-k+1),
	\end{flalign*} which completes the proof.\hfill\qedsymbol


\vspace{1cm}
	
	
\begin{thebibliography}{20}
	\bibitem{Anderson2016}
	J. Anderson, H.-J. Lai, X. X. Lin, M. R. Xu,
	\textit{On $k$-maximal strength digraphs},
	\textit{Journal of Graph Theory}
	\textbf{84}(1) (2016) 17--25.
	
	\bibitem{Bang2009}
	J. Bang-Jensen, G. Gutin,
	\textit{Digraphs: Theory, Algorithms and Applications},
	2nd edn. Springer-Verlag, London, 2009.
	
	\bibitem{Bernshteyn2016}
	A. Bernshteyn, A. Kostochka,
	\textit{On the number of edges in a graph with no $(k+1)$-connected subgraphs},
	\textit{Discrete Mathematics}
	\textbf{339}(2) (2016) 682--688.
	
	\bibitem{Erdos1964}
	P. Erd\H{o}s, A. Hajnal, J. W. Moon,
	\textit{A problem in graph theory},
	\textit{The American Mathematical Monthly}
	\textbf{71}(10) (1964) 1107--1110.
	
	\bibitem{Lai1990}
	H.-J. Lai,
	\textit{The size of strength-maximal graphs},
	\textit{Journal of Graph Theory}
	\textbf{14}(2) (1990) 187--197.
	
	\bibitem{Lei2019}
	H. Lei, S. O, Y. T. Shi, D. B. West, X. D. Zhu,
	\textit{Extremal problems on saturation for the family of $k$-edge-connected graphs},
	\textit{Discrete Applied Mathematics}
	\textbf{260} (2019) 278--283.
	
	\bibitem{Lin2016}
	X. X. Lin, S. H. Fan, H.-J. Lai, M. R. Xu,
	\textit{On the lower bound of $k$-maximal digraphs},
	\textit{Discrete Mathematics}
	\textbf{339}(10) (2016) 2500--2510.
	
	\bibitem{Mader1971}
	W. Mader,
	\textit{Minimale $n$-fach kantenzusammenh\"{a}ngende Graphen},
	\textit{Mathematische Annalen}
	\textbf{191} (1971) 21--28.
	
	\bibitem{Mader1972}
	W. Mader,
	\textit{Existenz $n$-fach zusammenh\"{a}ngender Teilgraphen in Graphen gen\"{u}gend gro\ss en Kantendichte},
	\textit{Abhandlungen aus dem Mathematischen Seminar der Universit\"{a}t Hamburg}
	\textbf{37} (1972) 86--97.
	
	\bibitem{Mader1979}
	W. Mader,
	\textit{Connectivity and edge-connectivity in finite graphs},
	In: B. Bollob\'{a}s (ed.),
	\textit{Surveys in Combinatorics},
	London Mathematical Society Lecture Note Series, Vol. 38,
	Cambridge University Press, Cambridge, 1979, pp. 66--95.
	
	\bibitem{Rose1974}
	D. J. Rose,
	\textit{On simple characterizations of $k$-trees},
	\textit{Discrete Mathematics}
	\textbf{7} (1974) 317--322.
	
	\bibitem{Turan1941}
	P. Tur\'{a}n,
	\textit{Eine Extremalaufgabe aus der Graphentheorie},
	\textit{Matematikai \'{e}s Fizikai Lapok}
	\textbf{48} (1941) 436--452.
	
	\bibitem{Wenger2014}
	P. Wenger,
	\textit{A note on the saturation number of the family of $k$-connected graphs},
	\textit{Discrete Mathematics}
	\textbf{323} (2014) 81--83.
	
	\bibitem{Xu2020}
	L. Q. Xu, H.-J. Lai, Y. Z. Tian,
	\textit{On the extremal sizes of maximal graphs without $(k+1)$-connected subgraphs},
	\textit{Discrete Applied Mathematics}
	\textbf{285} (2020) 397--406.
	
	\bibitem{Yuster2003}
	R. Yuster,
	\textit{A note on graphs without $k$-connected subgraphs},
	\textit{Ars Combinatoria}
	\textbf{67} (2003) 231--235.
\end{thebibliography}
\end{document}